\newtheorem{theo}{Theorem}[section]
\newtheorem{defn}[theo]{Definition}
\newtheorem{lem}[theo]{Lemma}
\newtheorem{lemma}[theo]{Lemma}
\newtheorem{prop}[theo]{Proposition}
\newtheorem{coro}[theo]{Corollary}
\def\coker{\mathop{\mathrm{coker}}\nolimits}
\newcommand{\on}[2]{\setbox0=\hbox{$#1$}\setbox1=\hbox{$#2$}%
            \dimen0=\wd0\advance\dimen0 by \wd1\divide\dimen0 by 2%
             \ifdim\wd0>\wd1$#1$\hskip-\dimen0$#2$\advance\dimen0 by -\wd1%
              \else$#2$\hskip-\dimen0$#1$\advance\dimen0 by -\wd0%
             \fi%
            \hskip\dimen0}
\newcommand{\onn}[2]{\makebox{\on{#1}{#2}}}
\newcommand{\cross}{\smash\times\vphantom\bullet}
\newcommand{\leftblob}[1]{\hspace*{-1ex}\onn{\raisebox{-.5ex}
     {$\genfrac{}{}{0pt}{}{\hfill\hrulefill}{\hphantom{\hspace*{2ex}#1}}$}}
                {\stackrel{#1}{\bullet}}}
\newcommand{\rightblob}[1]{\onn{\raisebox{-.5ex}
     {$\genfrac{}{}{0pt}{}{\hrulefill\hfill}{\hphantom{\hspace*{2ex}#1}}$}}
                 {\stackrel{#1}{\bullet}}\hspace*{-1ex}}
\newcommand{\middleblob}[1]{\onn{\raisebox{-.5ex}
     {$\genfrac{}{}{0pt}{}{\hrulefill}{\hphantom{\hspace*{2ex}#1}}$}}
                 {\stackrel{#1}{\bullet}}}
\newcommand{\leftcross}[1]{\hspace*{-1ex}\onn{\raisebox{-.5ex}
     {$\genfrac{}{}{0pt}{}{\hfill\hrulefill}{\hphantom{\hspace*{2ex}#1}}$}}
                {\stackrel{#1}{\cross}}}
\newcommand{\middlecross}[1]{\onn{\raisebox{-.5ex}
     {$\genfrac{}{}{0pt}{}{\hrulefill}{\hphantom{\hspace*{2ex}#1}}$}}
                 {\stackrel{#1}{\cross}}}
\newcommand{\xoo}[3]{\leftcross{#1}\hspace*{-1ex}
               \middleblob{#2}\hspace*{-1ex}\rightblob{#3}}
\newcommand{\xxo}[3]{\leftcross{#1}\hspace*{-1ex}
               \middlecross{#2}\hspace*{-1ex}\rightblob{#3}}
\newcommand{\oxo}[3]{\leftblob{#1}\hspace*{-1ex}
               \middlecross{#2}\hspace*{-1ex}\rightblob{#3}}
\begin{document}
\title{Twistor theory and the Harmonic Hull}
\author{Michael Eastwood}
\address{Mathematical Sciences Institute, Australian National 
University, ACT 0200,\newline Australia}
\email{meastwoo@member.ams.org}
\author{Feng Xu}
\address{Mathematical Sciences Institute, Australian National 
University, ACT 0200,\newline Australia}
\email{feng.xu@anu.edu.au}
\subjclass{Primary 31B05; Secondary 32L25, 44A15.}
\keywords{Harmonic hull, Twistor theory, Bateman's formula, 
Penrose transform.}
\begin{abstract}
We use twistor theory to identify the harmonic hull of an arbitrary connected 
open subset $U$ of ${\mathbf{R}}^{2m}$ for $m\geq 2$. It is the natural domain 
of analytic continuation in ${\mathbf{C}}^{2m}$ for harmonic functions on~$U$.
\end{abstract}
\dedicatory{Dedicated to Joseph Wolf on the occasion of his 75${}^{th}$ 
birthday.}
\renewcommand{\subjclassname}{\textup{2010} Mathematics Subject Classification}
\maketitle
\renewcommand{\thefootnote}{}
\footnotetext{Eastwood is supported by the Australian Research Council.}
\maketitle

\section*{Introduction}
Let ${\mathbf{R}}^n$ denote the standard $n$-dimensional Euclidean space and 
use $(x_1,\cdots, x_n)$ for its standard co\"ordinates. We shall consider 
${\mathbf{R}}^n$ as the real part of the standard $n$-dimensional complex space
${\mathbf{C}}^n$ with co\"ordinates $(z_1,\cdots, z_n)$ and write 
$\langle z, w \rangle$ for the bilinear form
\[\langle z, w\rangle =\sum_{i=1}^nz_iw_i\]
on ${\mathbf{C}}^n$ extending the usual Euclidean inner product 
on~${\mathbf{R}}^n$. Associated to this inner product, the 
Laplace operator on ${\mathbf{R}}^n$ is
\begin{equation}\label{defofLaplacian}
\Delta=\sum_{i=1}^n\frac{\partial^2}{\partial x_i{}^2}\end{equation}
and it naturally extends to a differential operator on ${\mathbf{C}}^n$, still
denoted by $\Delta$, as
\[\Delta=\sum_{i=1}^n\frac{\partial^2}{\partial z_i{}^2}.\]

Suppose $U$ is a connected open subset of ${\mathbf{R}}^n$ and $u$ is a 
harmonic function on~$U$, i.e.~a solution of Laplace's equation
\[\Delta u=0.\]
It is well-known that such a $u$ is real-analytic and hence has a holomorphic
extension to a open subset of ${\mathbf{C}}^n$ containing $U$. Evidently, this
holomorphic extension satisfies the complex Laplace equation. It is natural to
ask whether there is a common open subset of ${\mathbf{C}}^n$ to which all
harmonic functions on $U$ extend and, if so, whether there is maximal connected
open subset with this property. If $n\geq 4$ is even, these questions both have
affirmative answers and the resulting maximal connected open subset of
${\mathbf{C}}^n$ is called the {\it harmonic hull\/} of $U$ (in~\cite{ACL},
although the resulting set turns out to be the same, the harmonic hull is
initially defined in terms of polyharmonic functions). To identify the harmonic
hull we need some notation.

\begin{defn}[\cite{ACL}, p.~40]\label{isotropic}
For any $z\in{\mathbf{C}}^n$, the isotropic cone through $z$ is 
\[V(z)=\{w\in {\mathbf{C}}^n: \langle w-z, w-z\rangle =0\}.\]
\end{defn}
It is clear that $V$ has a symmetry property, namely 
$z\in V(w)\Longleftrightarrow w \in V(z)$.
\begin{defn}[\cite{ACL}, p.~42]\label{tildeU}
For $U$ a connected open subset of~${\mathbf{R}}^n$, we define 
$\tilde{U}$ to be the connected component of 
\[{\mathbf{C}}^n\setminus \bigcup_{x\in {\mathbf{R}}^n\setminus U}V(x)\]
containing~$U$.
\end{defn}
By the symmetry property of $V$, note that
\begin{eqnarray}\label{back_propagate}
{\mathbf{C}}^n\setminus \bigcup_{x\in \mathbf{R}^n\setminus U}V(x)\quad=\quad
\{z\in{\mathbf{C}}^n\mbox{ s.t.\ }V(z)\cap{\mathbf{R}}^n\subset U\}.
\end{eqnarray}
The purpose of this article is to prove the following result using twistor
theory (in \cite{ACL} it is proved by different means under some further 
conditions on~$U$).
\begin{theo}\label{HarmonicHull}
Any connected open subset $U\subseteq {\mathbf{R}}^{2m}$ for $m\geq 2$ has a 
harmonic hull and it is given by $\tilde{U}$.
\end{theo}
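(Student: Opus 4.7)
The inclusion $\mathrm{harmhull}(U)\subseteq\tilde U$ is the easy half. For each $x\in\mathbf{R}^{2m}\setminus U$ the fundamental solution $z\mapsto\langle z-x,z-x\rangle^{1-m}$ is holomorphic and harmonic on $\mathbf{C}^{2m}\setminus V(x)$; since $m\ge 2$, it has a genuine pole along the isotropic cone $V(x)$, so its restriction to $U$ is a harmonic function whose holomorphic continuation cannot cross $V(x)$. As the harmonic hull is by definition a connected open set containing $U$ to which every harmonic function on $U$ extends, it must avoid each $V(x)$ for $x\in\mathbf{R}^{2m}\setminus U$ and hence lie inside $\tilde U$.

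For the reverse inclusion I would invoke the Penrose transform. Introduce the flat twistor double fibration
\[
\mathbf{C}^{2m}\xleftarrow{\,\pi\,}F\xrightarrow{\,\tau\,}\mathcal{T},
\]
in which $L_z:=\tau(\pi^{-1}(z))\subset\mathcal{T}$ parametrises the $\alpha$-planes through $z\in\mathbf{C}^{2m}$ and $\alpha_t:=\pi(\tau^{-1}(t))\subset\mathbf{C}^{2m}$ is the maximal null affine subspace corresponding to $t\in\mathcal T$. Maximal isotropy of $\alpha$-planes at once gives the central geometric fact
\[
w\in V(z)\iff L_z\cap L_w\ne\emptyset.
\]
Setting $S=\mathbf{R}^{2m}\setminus U$, $\widehat{S}=\bigcup_{x\in S}L_x\subseteq\mathcal{T}$ and $\mathcal{V}=\mathcal{T}\setminus\widehat{S}$, this identity combined with~(\ref{back_propagate}) yields
\[
\mathcal{V}':=\{z\in\mathbf{C}^{2m}:L_z\subseteq\mathcal{V}\}\;=\;\mathbf{C}^{2m}\setminus\bigcup_{x\in S}V(x),
\]
of which $\tilde U$ is the connected component containing $U$. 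The Penrose transform for the Laplacian then supplies an isomorphism $\mathcal{P}\colon H^{m-1}(\mathcal{V},\mathcal{O}(-2))\xrightarrow{\sim}\ker\Delta|_{\mathcal{V}'}$, so that every class on the twistor side produces a harmonic function defined throughout $\tilde U$.

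To finish one has to show every harmonic $u$ on $U$ arises this way. Real-analyticity extends $u$ holomorphically to a complex tubular neighbourhood $\Omega$ of $U$; because $V(x)\cap\mathbf{R}^{2m}=\{x\}$ we may shrink $\Omega$ to lie inside $\tilde U$, forcing $\widehat{\Omega}\subseteq\mathcal{V}$. Applying Penrose on $\Omega$ gives a class $\mu\in H^{m-1}(\widehat{\Omega},\mathcal{O}(-2))$ with $\mathcal{P}(\mu)=u$ on $\Omega$, and the whole matter reduces to lifting $\mu$ through the natural restriction
\[
H^{m-1}(\mathcal{V},\mathcal{O}(-2))\longrightarrow H^{m-1}(\widehat{\Omega},\mathcal{O}(-2)).
\]
This surjectivity is the main obstacle I foresee. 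I would attack it by rewriting both sides as local cohomology on $\mathcal{T}$, exploiting the vanishing of $H^j(\mathcal{T},\mathcal{O}(-2))$ in the relevant degrees (for $m=2$ this is all of it, since $\mathcal{T}=\mathbf{CP}^3$), and then deriving a Hartogs/Bochner-type extension of cohomology classes across the neighbourhood difference $\mathcal{V}\setminus\widehat{\Omega}$.
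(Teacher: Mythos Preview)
Your maximality argument and the identification of the key geometric fact $w\in V(z)\iff L_z\cap L_w\neq\emptyset$ are correct and match the paper. The overall strategy of using the Penrose transform is also the paper's strategy. However, you take a significant detour that the paper avoids, and the detour contains the step you yourself flag as the main obstacle.

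The crucial point you are missing is that in Euclidean signature the twistor space fibres \emph{directly} over the real manifold: there is a smooth submersion $\tau:Z\to S^{2m}$ whose fibre over $x\in\mathbf{R}^{2m}\subset S^{2m}$ is exactly the twistor line $L_x$ (Proposition~\ref{manufacture_tau} in dimension four; its higher-dimensional analogue thereafter). Because distinct real points are never null-separated, the $L_x$ are disjoint, so $\tau^{-1}(U)=\bigcup_{x\in U}L_x$ is precisely your set $\mathcal V$. The paper then proves the Penrose transform \emph{over the real open set $U$ itself}:
\[
\mathcal P:\;H^{m(m-1)/2}\bigl(\tau^{-1}(U),\,\mathcal O(-2m+2)\bigr)\;\xrightarrow{\ \simeq\ }\;\{\phi:U\to\mathbf C:\Delta\phi=0\},
\]
via a spectral sequence built from the filtration $0\to\Lambda_\mu^{1,0}\to\Lambda_Z^{0,1}\to\Lambda_\tau^{0,1}\to 0$ on $Z$. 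This single statement already contains both halves of what you need: every harmonic $u$ on $U$ comes from a class on $\tau^{-1}(U)=\mathcal V$, and that same class then produces, by the complex double fibration, a holomorphic harmonic function on $\{z\in\mathbf C^{2m}:L_z\subset\tau^{-1}(U)\}=\tilde U$. There is no need to first extend $u$ to a complex neighbourhood $\Omega$, apply Penrose there, and then lift the class from $\widehat\Omega$ to $\mathcal V$; the surjectivity problem you identify simply does not arise.

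Your proposed workaround for that surjectivity (local cohomology plus a Hartogs/Bochner-type argument) is only sketched and would be delicate to carry out in general; in any case it is unnecessary once one has the real fibration $\tau$. Two further technical corrections for $m>2$: the twistor space is the isotropic Grassmannian $Z=\mathrm{SO}(2m+2,\mathbf C)/Q$ of dimension $m(m+1)/2$, the relevant cohomological degree is $m(m-1)/2$ rather than $m-1$, and the line bundle is $\mathcal O(-2m+2)$ rather than $\mathcal O(-2)$; your values are correct only when $m=2$.
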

It is clear that $\tilde{U}$ is maximal because for 
$x\in{\mathbf{R}}^{2m}\setminus U$ the Newtonian potential centred at $x$
\[r_x(z)=\frac{1}{\langle z-x,z-x\rangle^{m-1}}\]
is harmonic in $\tilde{U}$ but cannot be extended through~$V(x)$. Therefore, it
remains to show that every harmonic function on $U$ indeed extends
to~$\tilde{U}$.

In dimension 2 one needs to suppose that $U$ is simply connected, in which case
the result is easily derived by complex analysis. One only needs to know that
every harmonic function on $U$ can be written as the real part of a holomorphic
function, a fact that can be viewed as a rather degenerate form of twistor
theory. In dimension 4 twistor theory comes to the fore, providing a
replacement for this 2-dimensional statement. We shall begin with Bateman's
formula~\cite{bateman} for harmonic functions of four variables. This formula
is not completely precise and requires careful interpretation for rigorous
application. Twistor theory will be used to interpret the formula as a
transform on suitable cohomology. This is the classical Penrose transform
\cite{holyoke92,hitchin} and is sufficient to prove Theorem~\ref{HarmonicHull}.
This treatment in dimension 4 allows for generalizations to higher even
dimensions, using~\cite{murray}. We shall see in Section~\ref{odd} that the
harmonic hull in odd dimensions behaves differently.

Although the results in this article are rather straightforward deductions from
the Penrose transform, as a by-product we clarify the statements in~\cite{ACL}.
In particular, we draw attention to the sharp distinction between even and odd
dimensions. This is quite natural from the twistor theory point of view.
Another motivation, however, is to present the twistor approach as the natural
method that we anticipate will extend to other settings. In particular, our
motivation comes from a recent article \cite{ks} by Kroetz and Schlichtkrull
who use techniques from partial differential equations to show that
eigenfunctions of the Laplacian on a Riemannian symmetric space extend
holomorphically to its complex crown and we suggest that the integral
transforms on cohomology discussed in~\cite{fhw} might be used to the same
effect.

The authors would like to thank Henrik Schlichtkrull for drawing their
attention to the harmonic hull, Alexander Isaev for useful discussions on
constructible sets, and Amnon Neeman for simplifying the proof of
Lemma~\ref{PQPclosed}. Thanks are also due to the referee for spotting several
crucial misprints.

Finally, the first author would like to express thanks to Joe Wolf for
innumerable and inspirational mathematical conversations over the past twenty
years. This article is motivated by Joe's extensive work on the double
fibration transform and we are pleased to dedicate it to him on the occasion of
his $75^{\mathrm{th}}$ birthday.

\section{Harmonic hull in dimension~2}
In dimension 2, the complex Newtonian potential
\begin{eqnarray}\label{newt}r_x(z)=\log\langle z-x,z-x\rangle\end{eqnarray}
is harmonic but is only well-defined locally once a branch of logarithm has
been chosen. Consequently, an annulus in ${\mathbf{R}}^2$ does not have a
well-defined harmonic hull in~${\mathbf{C}}^2$ (for consider trying to extend
the real Newtonian potential based at a point encircled by the annulus). If $U$
is simply connected, however, then the result is exactly as in
Theorem~\ref{HarmonicHull}. Firstly, the potentials (\ref{newt}) for
$x\in{\mathbf{R}}^2\setminus U$ show that $\tilde{U}$ is maximal. Secondly, to
show that harmonic functions on $U$ extend to $\tilde{U}$ we may proceed as
follows.

It is well known that a harmonic function $u$ on a simply connected open
subset $U\subseteq {\mathbf{R}}^2$ can be written as
\begin{eqnarray}\label{HarmRepDim2}
u=f(\zeta)+g(\bar{\zeta}),\enskip \zeta=x_1+ix_2\end{eqnarray} 
for holomorphic functions $f$ and $g$. This representation evidently extends to
\begin{eqnarray}\label{extend}
\tilde{u}(z_1,z_2)=f(z_1+iz_2)+g(z_1-iz_2)\end{eqnarray}
whenever the right hand side makes sense, i.e.\ precisely when
\begin{eqnarray}\label{precise}
z_1+iz_2\in U\enskip\mbox{and}\enskip z_1-iz_2\in \bar{U},\end{eqnarray}
where $\bar{U}$ denotes the set of complex conjugates of points in~$U$. On the 
other hand, we may compute $\tilde{U}$ from the right hand side 
of~(\ref{back_propagate}). Specifically, $V(z)\cap{\mathbf{R}}^2$ is the set
\[\begin{array}{l}
\phantom{={}}\{(x_1,x_2)\in{\mathbf{R}}^2\mbox{ s.t.\ }
(z_1-x_1)^2+(z_2-x_2)^2=0\}\\
=\{(x_1,x_2)\in{\mathbf{R}}^2\mbox{ s.t.\ }
((z_1-x_1)+i(z_2-x_2))((z_1-x_1)-i(z_2-x_2))=0\}\\
=\{(x_1,x_2)\in{\mathbf{R}}^2\mbox{ s.t.\ }
z_1+iz_2=x_1+ix_2\mbox{ or }z_1-iz_2=x_1-ix_2\}
\end{array}\]
and so $V(z)\cap{\mathbf{R}}^2\subset U$ if and only if the conditions
(\ref{precise}) hold. It is also clear from (\ref{precise}) that these
conditions define a connected (and simply-connected) subset of
${\mathbf{C}}^2$. Therefore (\ref{extend}) extends $u$ to $\tilde{U}$, as
required.
    
\section{Harmonic hull in dimension 4}
The investigation in dimension 2 suggests that a representation of harmonic
functions by holomorphic data will also be useful in understanding the harmonic
hull in higher dimensions. In dimension 4 such a representation, albeit too
na\"{\i}ve for our purposes, is given by Bateman's formula~\cite{bateman}.
\subsection{Bateman's formula}
Let $f$ be a holomorphic function of 3 complex variables. 
Consider the function defined on $U\subseteq{\mathbf{R}}^4$ by
\begin{eqnarray}\label{Bateman}
u(x)=\oint_{\gamma} f\left((x_1+ix_2)+(ix_3+x_4)\zeta, 
(ix_3-x_4)+(x_1-ix_2)\zeta,\zeta \right) d\zeta,
\end{eqnarray}
where $\gamma$ is some contour on the complex $\zeta$-plane. Differentiating
under the integral sign shows that $u$ satisfies the Laplace equation.

The cautious reader may be concerned, however, that the domain of definition
for $f$ has not been specified nor has the precise location of the
contour~$\gamma$. If $f$ is defined on all of ${\mathbf{C}}^4$, for example,
then $u(x)$ will be identically zero by Cauchy's theorem. Precision will be
restored later by twistor theory. For the moment, let us pretend that this
expression makes good unambiguous sense and let us further assume that every
harmonic function $u$ has such a representation, a fact also to be justified by
twistor theory. To see how Bateman's formula (\ref{Bateman}) allows us to
extend $u$, observe that the mapping implicit in the integrand 
of~(\ref{Bateman}), namely 
\[\zeta\mapsto
\left((x_1+ix_2)+(ix_3+x_4)\zeta, (ix_3-x_4)+(x_1-ix_2)\zeta,\zeta \right),\]
defines, for each $x=(x_1,x_2,x_3,x_4)\in {\mathbf{R}}^4$, a complex affine
line $L_x\subset{\mathbf{C}}^3$. The same assignment equally defines for each
$z=(z_1,z_2,z_3,z_4)\in{\mathbf{C}}^4$, a line $L_z\subset{\mathbf{C}}^3$. Two
such lines $L_z$ and $L_{z'}$ intersect if and only if the linear system 
in~$\zeta$
\begin{eqnarray*}
(z_1-z_1')+i(z_2-z_2')+(i(z_3-z_3')+z_4-z_4')\zeta=0,\\
i(z_3-z_3')-(z_4-z_4')+((z_1-z_1')-i(z_2-z_2'))\zeta=0
\end{eqnarray*}
has a solution. Thus, a necessary condition for 
$L_z\cap L_{z'}\not=\emptyset$ is that the determinant of the coefficient 
matrix vanish, more specifically
\begin{eqnarray}\label{NullCondition}\langle z-z',z-z'\rangle=0.\end{eqnarray}
This condition is not sufficient but if we embed ${\mathbf{C}}^3$ into 
projective space ${\mathbf{CP}}^3$ so that the third homogeneous co\"ordinate 
is~$1$, then its composition with the previous mapping gives
\[{\mathbf{C}}\ni\zeta\mapsto 
\left[(x_1+ix_2)+(ix_3+x_4)\zeta, (ix_3-x_4)+(x_1-ix_2)\zeta,1,\zeta\right]
\in{\mathbf{CP}}^3,\]
which naturally compactifies as an embedding of the projective line 
${\mathbf{CP}}^1\hookrightarrow{\mathbf{CP}}^3$ so that (\ref{NullCondition}) 
is now sufficient for non-trivial intersection. In other words, if we now 
write $L_z$ for the image in ${\mathbf{CP}}^3$ of the embedding 
${\mathbf{CP}}^1\hookrightarrow{\mathbf{CP}}^3$ given by
\begin{equation}\label{embed}[\zeta_1,\zeta_2]\mapsto 
\left[(z_1+iz_2)\zeta_1+(iz_3+z_4)\zeta_2, 
(iz_3-z_4)\zeta_1+(z_1-iz_2)\zeta_2,\zeta_1,\zeta_2\right],\end{equation}
then $L_z\cap L_{z'}\not=\emptyset$ if and only if (\ref{NullCondition}) holds.
    
In particular, if $x$ and $x'$ are distinct real points, then
(\ref{NullCondition}) never holds and so $L_x$ and $L_{x'}$ can never
intersect. In fact, it is easy to check that the set of lines $\{L_x:x\in U\}$
foliates an open subset ${\mathcal{U}}\subset{\mathbf{CP}}^3$. Certainly, if
one allows $x$ in Bateman's formula to become complex, i.e.\ we consider
(\ref{Bateman}) with $x\in{\mathbf{R}}^4$ simply replaced by
$z\in{\mathbf{C}}^4$, then we obtain a holomorphic solution of Laplace's
equation, say~$\tilde{u}(z)$, extending~$u(x)$. Thus, if we could make good
sense of Bateman's formula as associating a harmonic function to some
holomorphic data on ${\mathcal{U}}$ and if every harmonic function of $U$ were
to arise in this way, then we would expect the same formula to associate an
extension $\tilde{u}(z)$ of $u(x)$ provided that $L_z$ were contained
in~${\mathcal{U}}$. Therefore, we should identify 
$\{z\in{\mathbf{C}}^4\mbox{ s.t.\ }L_z\subset{\mathcal{U}}\}$. To do this we 
observe that the region of ${\mathbf{CP}}^3$ swept out by $L_x$ for 
$x\in{\mathbf{R}}^4$ is the same as the region swept by $L_z$ for 
$z\in{\mathbf{C}}^4$ (only the line $[*,*,0,0]\in{\mathbf{CP}}^3$ `at 
infinity' is omitted in either case). Therefore, to say that 
$L_z\subset{\mathcal{U}}$ is to say that $L_z$ does not intersect $L_x$ for 
all $x\in{\mathbf{R}}^4\setminus U$ and by (\ref{NullCondition}) this is to 
say that $\langle z-x,z-x\rangle\not=0$. In other words, in terms of 
Definition~\ref{isotropic}, we have
\[\{z\in{\mathbf{C}}^4\mbox{ s.t.\ }L_z\subset{\mathcal{U}}\}=
\{z\in{\mathbf{C}}^4\mbox{ s.t.\ }z\not\in V(x)\;\forall 
x\in{\mathbf{R}}^4\setminus U\}.\]
According to Definition~\ref{tildeU} we conclude that harmonic functions on $U$
extend to~$\tilde{U}$, as required. Of course, this reasoning is based solely
on the geometry implicit in the form of the integrand in Bateman's
formula~(\ref{Bateman}). Once we use this geometry to make rigorous sense of
Bateman's formula, then we shall have a genuine proof.

\subsection{Justification of Bateman's formula: the Penrose transform}
Let us elaborate on the geometry uncovered in the previous section. We
associated a complex line $L_x$ in ${\mathbf{CP}}^3$ for each
$x\in{\mathbf{R}}^4$. A complex line in ${\mathbf{CP}}^3$ is the same as a
complex $2$-dimensional linear subspace of~${\mathbf{C}}^4$. Thus, we obtain an
embedding ${\mathbf{R}}^4\hookrightarrow{\mathrm{Gr}}_2({\mathbf{C}}^4)$, where
${\mathrm{Gr}}_2({\mathbf{C}}^4)$ denotes the Grassmannian of $2$-dimensional 
linear subspaces in~${\mathbf{C}}^4$. Specifically,
\[(x_1,x_2,x_3,x_4)\mapsto\left\{(Z_1,Z_2,Z_3,Z_4)\mbox{ s.t.\ }
\left[\!\begin{array}cZ_1\\ Z_2\end{array}\!\right]=
\left[\!\begin{array}{cc}x_1+ix_2&ix_3+x_4\\ix_3-x_4&x_1-ix_2
\end{array}\!\right]
\left[\!\begin{array}cZ_3\\ Z_4\end{array}\!\right]\right\}.\]
Now consider the double fibration
\begin{eqnarray}\label{twistorcorrespondence}
\raisebox{-25pt}{\begin{picture}(80,55)(0,-5)
\put(40,40){\makebox(0,0){${\mathbf{F}}_{1,2}({\mathbf{C}}^4)$}}
\put(0,0){\makebox(0,0){${\mathbf{CP}}^3$}}
\put(80,0){\makebox(0,0){${\mathrm{Gr}}_2({\mathbf{C}}^4)$}}
\put(30,30){\vector(-1,-1){20}}
\put(50,30){\vector(1,-1){20}}
\put(14,21){\makebox(0,0){$\mu$}}
\put(67,21){\makebox(0,0){$\nu$}}
\end{picture}}\end{eqnarray}
where ${\mathbf{F}}_{1,2}({\mathbf{C}}^4)$ denotes the complex flag manifold
\[{\mathbf{F}}_{1,2}({\mathbf{C}}^4)=
\{L_1\subset L_2\subset{\mathbf{C}}^4\mbox{ where }\dim L_i=i\}\]
and where $\mu$ and $\nu$ are the tautological `forgetful' maps. The formula
(\ref{embed}) for $L_z$ is now interpreted as $L_z\equiv\mu(\nu^{-1}(z))$ for
$z\in{\mathbf{C}}^4\cong{\mathbf{C}}^{2\times 2}\hookrightarrow
{\mathrm{Gr}}_2({\mathbf{C}}^4)$ a standard affine co\"ordinate patch (with a
convenient change of basis included in 
${\mathbf{C}}^4\cong{\mathbf{C}}^{2\times 2}$). 
We now ask how the embedding 
${\mathbf{R}}^4\hookrightarrow{\mathrm{Gr}}_2({\mathbf{C}}^4)$ sits with 
respect to the double fibration~(\ref{twistorcorrespondence}). 
\begin{prop}\label{manufacture_tau}\mbox{}    
\begin{itemize}
\item The closure of\/
${\mathbf{R}}^4\hookrightarrow{\mathrm{Gr}}_2({\mathbf{C}}^4)$ is a smooth 
embedding $S^4\hookrightarrow{\mathrm{Gr}}_2({\mathbf{C}}^4)$.
\item For all $Z\in{\mathbf{CP}}^3$, the intersection 
$\nu(\mu^{-1}(Z))\cap S^4$ is a single point.
\item The assignment $Z\mapsto\nu(\mu^{-1}(Z))\cap S^4$ defines a fibration
$\tau:{\mathbf{CP}}^3\to S^4$.
\end{itemize}
\end{prop}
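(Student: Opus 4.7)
The plan is to recognize this proposition as the classical quaternionic twistor fibration $\mathbf{CP}^3 \to \mathbf{HP}^1 = S^4$. The key ingredient is the anti-linear map $j: \mathbf{C}^4 \to \mathbf{C}^4$ defined by
\[ j(Z_1, Z_2, Z_3, Z_4) = (-\bar{Z}_2, \bar{Z}_1, -\bar{Z}_4, \bar{Z}_3), \]
which satisfies $j^2 = -1$. This endows $\mathbf{C}^4$ with the structure of a right $\mathbf{H}$-module $\mathbf{H}^2$, so that the $j$-invariant complex $2$-planes in $\mathbf{C}^4$ are precisely the quaternionic $1$-dimensional subspaces of $\mathbf{H}^2$, which form $\mathbf{HP}^1 \cong S^4$ inside $\mathrm{Gr}_2(\mathbf{C}^4)$.

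For the first bullet, a direct computation shows that the plane assigned to $(x_1, x_2, x_3, x_4)$ is spanned by the vectors $v_1 = (x_1+ix_2, ix_3-x_4, 1, 0)$ and $v_2 = (ix_3+x_4, x_1-ix_2, 0, 1)$, and that $jv_1 = v_2$. Hence the image of $\mathbf{R}^4$ consists of exactly the $j$-invariant $2$-planes transverse to $\{Z_3 = Z_4 = 0\}$. The closure in $\mathrm{Gr}_2(\mathbf{C}^4)$ adds only the plane $\{Z_3 = Z_4 = 0\}$ itself, which is manifestly $j$-invariant, so the closure coincides with the smooth submanifold of all $j$-invariant $2$-planes. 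This is diffeomorphic to $\mathbf{HP}^1 = S^4$ and the identification is a smooth embedding.

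For the second bullet, given $[Z] \in \mathbf{CP}^3$, the vectors $Z$ and $jZ$ are complex-linearly independent: if $jZ = \lambda Z$ for some $\lambda \in \mathbf{C}$, then applying $j$ again yields $-Z = j^2 Z = \bar{\lambda}\, jZ = |\lambda|^2 Z$, contradicting $|\lambda|^2 \geq 0$. Hence $\mathbf{C}Z + \mathbf{C}jZ$ is a well-defined $j$-invariant $2$-plane containing $Z$, and it is clearly the only such plane, since any $j$-invariant plane containing $Z$ must contain $jZ$ as well. This plane is therefore the unique element of $\nu(\mu^{-1}(Z)) \cap S^4$.

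For the third bullet, the assignment $Z \mapsto \mathbf{C}Z + \mathbf{C}jZ$ is smooth on $\mathbf{C}^4 \setminus \{0\}$ and descends to a smooth map $\tau: \mathbf{CP}^3 \to S^4$. Its fibers are the projectivizations of quaternionic lines, each a $\mathbf{CP}^1$ of constant dimension; together with the compactness of the spaces, this shows $\tau$ is a smooth fibration. The only subtle technical point is fixing the correct $j$ so that the verification $jv_1 = v_2$ works with the specific matrix form used in the embedding; once that is arranged, all three assertions follow from standard linear algebra over~$\mathbf{H}$.
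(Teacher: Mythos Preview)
Your proof is correct, and at its core uses the same anti-linear involution as the paper: the paper's $\theta[Z_1,Z_2,Z_3,Z_4]=[-\bar Z_2,\bar Z_1,-\bar Z_4,\bar Z_3]$ is your~$j$, and the paper likewise identifies the unique point of $\nu(\mu^{-1}(Z))\cap S^4$ with the plane $[Z\wedge\theta Z]$. The packaging differs, however. You invoke the quaternionic structure directly, identifying $S^4$ with $\mathbf{HP}^1$ as the set of $j$-invariant $2$-planes, and appeal to standard linear algebra over~$\mathbf{H}$. The paper instead passes through the Pl\"ucker embedding $\mathrm{Gr}_2(\mathbf{C}^4)=\mathbf{Q}_4\hookrightarrow\mathbf{CP}^5$, exhibits a specific real form $\mathbf{RP}^5\hookrightarrow\mathbf{CP}^5$, and identifies the closure of $\mathbf{R}^4$ with $\mathbf{RP}^5\cap\mathbf{Q}_4$, which is visibly the round~$S^4$; it then verifies by an explicit reality computation that $[Z\wedge\theta Z]$ lands in this~$\mathbf{RP}^5$, and writes out $\tau$ in coordinates to see it is a submersion. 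Your route is cleaner and more conceptual in dimension~$4$; the paper's Pl\"ucker/quadric route is chosen deliberately because it is the one that generalises to ${\mathbf{Q}}_{2m}$ for $m>2$, where no quaternionic structure is available. One small point worth tightening in your write-up: to conclude that the closure is \emph{exactly} the set of $j$-invariant planes, note that this set is closed (as the fixed locus of the induced involution on $\mathrm{Gr}_2(\mathbf{C}^4)$) and that the image of $\mathbf{R}^4$ misses only the single point $\{Z_3=Z_4=0\}$, which lies in every neighbourhood of the image.
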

\begin{proof} The first point could be checked by looking in all standard 
affine co\"ordinate patches. It is convenient, however, to adopt a viewpoint 
that generalises to higher dimensions as follows.
Regarding ${\mathrm{Gr}}_2({\mathbf{C}}^4)$ as the simple $2$-forms in $4$ 
variables up to scale, the Pl\"ucker embedding
\[{\mathrm{Gr}}_2({\mathbf{C}}^4)=\{[\alpha\wedge\beta]\}=
\{[v]\mbox{ s.t.\ }v\wedge v=0\}\hookrightarrow
{\mathbf{P}}(\Lambda^2{\mathbf{C}}^4)={\mathbf{CP}}^5\]
identifies ${\mathrm{Gr}}_2({\mathbf{C}}^4)$ as the non-singular quadric 
${\mathbf{Q}}_4$ of dimension~$4$. Then, for the composition
${\mathbf{R}}^4\hookrightarrow{\mathrm{Gr}}_2({\mathbf{C}}^4)\hookrightarrow
{\mathbf{CP}}^5$ 
we obtain
\[\begin{array}{crl}(x_1,x_2,x_3,x_4)&\mapsto&
[(x_1+ix_2,ix_3-x_4,1,0)\wedge(ix_3+x_4,x_1-ix_2,0,1)]\\[3pt]
&=&\left[\sum_{i<j}\phi_{ij}dZ_i\wedge dZ_j\right],
\end{array}\]
where
\[\begin{array}{rclrclrcl}
\phi_{12}&=&x_1{}^2+x_2{}^2+x_3{}^2+x_4{}^2\enskip
&\phi_{13}&=&-ix_3-x_4\enskip
&\phi_{14}&=&x_1+ix_2\\
\phi_{23}&=&-x_1+ix_2\enskip
&\phi_{24}&=&ix_3-x_4\enskip
&\phi_{34}&=&1,
\end{array}\]
the quadric ${\mathbf{Q}}_4$ being given by
$\phi_{12}\phi_{34}-\phi_{13}\phi_{24}+\phi_{14}\phi_{23}=0$. Now consider the
embedding ${\mathbf{RP}}^5\hookrightarrow{\mathbf{CP}}^5$ given by
\[
[\xi_0,\xi_1,\xi_2,\xi_3,\xi_4,\xi_5]\mapsto
[\xi_0-\xi_5,-i\xi_3-\xi_4,\xi_1+i\xi_2,-\xi_1+i\xi_2,i\xi_3-\xi_4,\xi_0+\xi_5]
\]
and notice that 
\[
(\phi_{12}\phi_{34}-\phi_{13}\phi_{24}+\phi_{14}\phi_{23})|_{{\mathbf{RP}}^5}
=\xi_0{}^2-\xi_1{}^2-\xi_2{}^2-\xi_3{}^2-\xi_4{}^2-\xi_5{}^2.\]
It follows that the closure of 
${\mathbf{R}}^4\hookrightarrow{\mathrm{Gr}}_2({\mathbf{C}}^4)$ is the 
intersection ${\mathbf{RP}}^5\cap{\mathbf{Q}}_4$ and that this in turn is
\[\{[\xi_0,\xi_1,\xi_2,\xi_3,\xi_4,\xi_5]\in{\mathbf{RP}}^5\mbox{ s.t.\ }
\xi_1{}^2+\xi_2{}^2+\xi_3{}^2+\xi_4{}^2+\xi_5{}^2=\xi_0{}^2\},\]
which may be identified with the sphere $S^4$. For later use, let us note that
the induced mapping ${\mathbf{R}}^4\hookrightarrow S^4$ is given by
\[{\mathbf{R}}^4\ni{\mathbf{x}}\mapsto\frac{1}{1+\|{\mathbf{x}}\|^2}
\left[\begin{array}c 2{\mathbf{x}}\\ 1-\|{\mathbf{x}}\|^2\end{array}\right]
\in S^4\subset{\mathbf{R}}^5,\]
which is inverse stereographic projection. If we define 
$\theta:{\mathbf{CP}}^3\to{\mathbf{CP}}^3$ by 
\[\theta[Z_1,Z_2,Z_3,Z_4]=[-\bar{Z}_2,\bar{Z}_1,-\bar{Z}_4,\bar{Z}_3]\]
then the plane $\phi\equiv[Z\wedge\theta Z]$ is given by 
\[\begin{array}{rclrclrcl}
\phi_{12}&=&Z_1\bar{Z}_1+Z_2\bar{Z}_2\enskip
&\phi_{13}&=&-Z_1\bar{Z}_4+Z_3\bar{Z}_2\enskip
&\phi_{14}&=&Z_1\bar{Z}_3+Z_4\bar{Z}_2\\
\phi_{23}&=&-Z_2\bar{Z}_4-Z_3\bar{Z}_1\enskip
&\phi_{24}&=&Z_2\bar{Z}_3-Z_4\bar{Z}_1\enskip
&\phi_{34}&=&Z_3\bar{Z}_3+Z_4\bar{Z}_4
\end{array}\]
and satisfies $\bar{\phi}_{12}=\phi_{12}$, $\bar{\phi}_{13}=\phi_{24}$, 
$\bar{\phi}_{14}=-\phi_{23}$, $\bar{\phi}_{34}=\phi_{34}$ which are exactly 
the conditions to lie in ${\mathbf{RP}}^5\hookrightarrow{\mathbf{CP}}^5$ as 
defined above. It is straightforward to check that this is the only plane 
through $Z$ with this property. Finally, if we solve for 
$[\xi_0,\xi_1,\xi_2,\xi_3,\xi_4,\xi_5]$ we find that 
$\tau:{\mathbf{CP}}^3\to S^4$ is given by
\[\left[\begin{array}cZ_1\\ Z_2\\ Z_3\\ Z_4\end{array}\right]\longmapsto
\frac{1}{Z_1\bar{Z}_1+Z_2\bar{Z}_2+Z_3\bar{Z}_3+Z_4\bar{Z}_4}
\left[\begin{array}c
   Z_1\bar{Z}_3+Z_2\bar{Z}_4+Z_3\bar{Z}_1+Z_4\bar{Z}_2\\
i(-Z_1\bar{Z}_3+Z_2\bar{Z}_4+Z_3\bar{Z}_1-Z_4\bar{Z}_2)\\
i(-Z_1\bar{Z}_4-Z_2\bar{Z}_3+Z_3\bar{Z}_2+Z_4\bar{Z}_1)\\
   Z_1\bar{Z}_4-Z_2\bar{Z}_3-Z_3\bar{Z}_2+Z_4\bar{Z}_1\\
-Z_1\bar{Z}_1-Z_2\bar{Z}_2+Z_3\bar{Z}_3+Z_4\bar{Z}_4\end{array}\right]\]
which is certainly a submersion.
\end{proof}

The following theorem provides a strict interpretation of Bateman's formula in
which the formula itself is viewed as an attempt to write ${\mathcal{P}}$ in
\v{C}ech cohomology.

\begin{theo}\label{classicalPenrose}
Suppose $U\subseteq{\mathbf{R}}^4$ is an open subset. There is a 
natural isomorphism
\begin{equation}\label{bateman_justified}
{\mathcal{P}}:H^1(\tau^{-1}(U),{\mathcal{O}}(-2))
\stackrel{\;\simeq\quad}{\longrightarrow}
\{\phi:U\to{\mathbf{C}}\mbox{ s.t.\ }\Delta\phi=0\}.\end{equation}
\end{theo}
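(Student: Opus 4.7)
The plan is to realise $\mathcal{P}$ as the composition of pull-back along $\mu$ with push-forward along $\nu$ after restricting the double fibration~(\ref{twistorcorrespondence}) to a suitable correspondence space over $U$. This is the classical Penrose transform machinery of~\cite{hitchin,holyoke92}, and the substance of the proof lies in the two resulting direct image calculations.

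Set $Y := \tau^{-1}(U) \subset {\mathbf{CP}}^3$ and $\widehat{F} := \mu^{-1}(Y) \subset {\mathbf{F}}_{1,2}({\mathbf{C}}^4)$. By Proposition~\ref{manufacture_tau}, for every $x \in U$ the whole line $L_x = \mu(\nu^{-1}(x))$ lies in $Y$, so the ${\mathbf{CP}}^1$-fibre $\nu^{-1}(x)$ is contained in $\widehat{F}$; consequently $\nu|_{\widehat{F}}$ is a ${\mathbf{CP}}^1$-bundle over~$U$. The first step I would carry out is pull-back along $\mu|_{\widehat{F}}$. This map is a ${\mathbf{CP}}^2$-bundle (the fibre over $[L_1]$ parametrises the $2$-planes containing $L_1$), and $\mu^*{\mathcal{O}}(-2)$ is trivial on each such fibre. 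Since $H^0({\mathbf{CP}}^2, {\mathcal{O}}) = {\mathbf{C}}$ and $H^q({\mathbf{CP}}^2, {\mathcal{O}}) = 0$ for $q > 0$, the projection formula gives $R^0\mu_*\mu^*{\mathcal{O}}(-2) = {\mathcal{O}}(-2)$ and $R^q\mu_*\mu^*{\mathcal{O}}(-2) = 0$ for $q > 0$. The Leray spectral sequence then collapses to yield
\[
\mu^*\colon H^1(Y, {\mathcal{O}}(-2)) \xrightarrow{\;\cong\;} H^1(\widehat{F}, \mu^*{\mathcal{O}}(-2)).
\]

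The second step, which I expect to be the main obstacle, is the push-forward along $\nu|_{\widehat{F}}$ followed by restriction to the real slice~$U$. On each ${\mathbf{CP}}^1$-fibre of $\nu$ the line bundle $\mu^*{\mathcal{O}}(-2)$ restricts to~${\mathcal{O}}(-2)$, which has vanishing $H^0$ and one-dimensional~$H^1$. A relative Dolbeault computation along these fibres — equivalently, a short BGG-type resolution of $\mu^*{\mathcal{O}}(-2)$ on $\widehat{F}$ in the spirit of~\cite{hitchin,holyoke92} — should identify $R^1\nu_*(\mu^*{\mathcal{O}}(-2))|_U$ with the sheaf of harmonic functions on~$U$. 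Concretely, a \v{C}ech representative of a class in $H^1(\widehat{F}, \mu^*{\mathcal{O}}(-2))$ is integrated over the ${\mathbf{CP}}^1$-fibre above $x \in U$ to produce a complex-valued function of~$x$; the Laplace equation on the result is then a residue identity on~${\mathbf{CP}}^1$, in which Bateman's integrand~(\ref{Bateman}) appears exactly as the local form of the \v{C}ech cocycle under the parametrisation~(\ref{embed}). Injectivity will follow from the vanishing of $H^0$ along the fibres, and surjectivity from inverting the fibre integral to manufacture a cocycle from an arbitrary harmonic function.

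The delicate point is that $\tau$ is smooth but not holomorphic, so a naive holomorphic Leray argument over the base is unavailable; the push-forward must be carried out in a relative Dolbeault framework compatible with the totally real embedding $U \hookrightarrow {\mathrm{Gr}}_2({\mathbf{C}}^4)$, and the identification of the resulting operator with the Euclidean $\Delta$ is where the specific geometry of $L_z$ in~(\ref{embed}) enters. Once this is in place, naturality of both steps assembles them into the required isomorphism~(\ref{bateman_justified}).
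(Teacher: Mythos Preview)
Your pull-back along $\mu$ is correct, but routing through $\widehat F=\mu^{-1}(Y)$ leaves the crucial step unaddressed and is not how the paper proceeds. The assertion that $\nu|_{\widehat F}$ is a ${\mathbf{CP}}^1$-bundle over $U$ is misleading: the image $\nu(\widehat F)$ is an open subset of ${\mathrm{Gr}}_2({\mathbf{C}}^4)$ strictly larger than $\tilde U$, and over points outside $\tilde U$ the fibres of $\nu|_{\widehat F}$ are open proper subsets of ${\mathbf{CP}}^1$, hence non-compact. So there is no clean holomorphic Leray spectral sequence for $\nu|_{\widehat F}$; writing $R^1\nu_*(\mu^*{\mathcal{O}}(-2))|_U$ presumes a holomorphic push-forward to a complex base followed by restriction, but $\widehat F$ is not of the form $\nu^{-1}(\mbox{open})$ for any open set in the Grassmannian. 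You correctly diagnose that a relative Dolbeault framework adapted to the totally real slice is required, but supplying that framework \emph{is} the proof, and the proposal stops exactly there.

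The paper's mechanism dispenses with $\widehat F$ entirely. It realises $Z$ as a smooth (non-holomorphic) section of $\mu$ inside ${\mathfrak{X}}$, so that $\tau=\nu|_Z$, and exploits the fact that although $\tau$ is not holomorphic its fibres are. This forces a short exact sequence $0\to\Lambda_\mu^{1,0}\to\Lambda_Z^{0,1}\to\Lambda_\tau^{0,1}\to 0$ of complex bundles on $Z$ itself, with $\Lambda_\mu^{1,0}$ restricted from~${\mathfrak{X}}$. The induced filtration on the Dolbeault complex $\Lambda_Z^{0,\bullet}(V)$ is $\bar\partial$-compatible and gives a spectral sequence $E_1^{p,q}=\Gamma(U,\tau_*^q\Lambda_\mu^{p,0}(V))\Rightarrow H^{p+q}(\tau^{-1}(U),{\mathcal{O}}(V))$ directly on $\tau^{-1}(U)$, the $E_1$ terms being computable by Bott--Borel--Weil because the smooth bundles $\tau_*^q\Lambda_\mu^{p,0}(V)$ on $U$ are restrictions of the holomorphic direct images $\nu_*^q(\Lambda_\mu^{p,0}\otimes\mu^*V)$ on ${\mathbf{C}}M$. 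For $V={\mathcal{O}}(-2)$ only $E_1^{0,1}=\Gamma(U,{\mathcal{E}}[-1])$ and $E_1^{2,0}=\Gamma(U,{\mathcal{E}}[-3])$ survive, and the $d_2$ between them is identified with the Laplacian not by a residue calculation but by ${\mathrm{SO}}(5,1)$-invariance---it must be a constant multiple of the conformal Laplacian---with non-triviality of that constant checked via $H^2({\mathbf{CP}}^3\setminus L_\infty,{\mathcal{O}}(-2))=0$.
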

\begin{proof} A detailed elementary proof may be found in \cite{split}, for
example. Here, we shall present a more abstract proof avoiding some of the 
detail. Much of the argument is rather general and so we shall generalise the 
notation, writing 
\begin{equation}\label{realdiagram}
\raisebox{-20pt}{\begin{picture}(70,50)
\put(35,40){\makebox(0,0){${\mathfrak{X}}$}}
\put(0,5){\makebox(0,0){$Z$}}
\put(70,5){\makebox(0,0){${\mathbf{C}}M$}}
\put(30,35){\vector(-1,-1){25}}
\put(40,35){\vector(1,-1){25}}
\put(107,5){\makebox(0,0){$M$}}
\put(90,5){\makebox(0,0){$\supset$}}
\put(10,20){\makebox(0,0){$\mu$}}
\put(61,20){\makebox(0,0){$\nu$}}
\end{picture}}
\end{equation}
instead of (\ref{twistorcorrespondence}) for a general double fibration of
complex manifolds, where we are assuming and incorporating into the notation
that the complex manifold ${\mathbf{C}}M$ is the complexification of a smooth
real manifold~$M$ (in fact, we shall only need ${\mathbf{C}}M$ in a
neighbourhood of~$M$). In our case
${\mathbf{C}}M={\mathrm{Gr}}_2({\mathbf{C}}^4)$ is the complexification of
$M=S^4$ and here we know that for each $Z\in{\mathbf{CP}}^3$, there is a unique
point in~$S^4$, namely~$\tau(Z)$, such that $Z$ and $\tau(Z)$ are in
correspondence under~(\ref{twistorcorrespondence}). We may abstract this
knowledge by adding to our diagram as follows
\begin{equation}\label{specialrealdiagram}
\raisebox{-20pt}{\begin{picture}(70,50)
\put(35,40){\makebox(0,0){${\mathfrak{X}}$}}
\put(0,5){\makebox(0,0){$Z$}}
\put(70,5){\makebox(0,0){${\mathbf{C}}M$}}
\put(30,35){\vector(-1,-1){25}}
\put(40,35){\vector(1,-1){25}}
\put(75,35){\vector(1,-1){25}}
\put(72,40){\makebox(0,0){$Z$}}
\put(107,5){\makebox(0,0){$M$}}
\put(53.5,40){\makebox(0,0){$\supset$}}
\put(90,5){\makebox(0,0){$\supset$}}
\put(67,35){\line(-2,-1){17}}
\put(66.5,36){\line(-2,-1){17}}
\put(7,5){\line(2,1){41}}
\put(6.5,6){\line(2,1){41}}
\put(10,20){\makebox(0,0){$\mu$}}
\put(61,20){\makebox(0,0){$\nu$}}
\put(96,20){\makebox(0,0){$\tau$}}
\end{picture}}
\end{equation}
where $Z$ is realised as a smooth submanifold of the complex manifold
${\mathfrak{X}}$ in addition to being its holomorphic quotient under~$\mu$ and
$\tau$ is simply the restriction of the holomorphic fibration $\nu$ to $Z$
realised in this way. In general, let us write
\[m=\dim_{\mathbf{C}}(\mbox{fibres of~$\mu$})\quad\mbox{and}\quad
s=\dim_{\mathbf{C}}(\mbox{fibres of~$\nu$}).\]
In our case (\ref{twistorcorrespondence}) we have $m=2$ and $s=1$. In general,
it follows that
\begin{equation}\label{dimensions}
\dim_{\mathbf{R}}M=\dim_{\mathbf{C}}{\mathbf{C}}M=2m\qquad
\dim_{\mathbf{C}}Z=m+s\qquad\dim_{\mathbf{C}}{\mathfrak{X}}=2m+s.\end{equation}
Although $\tau$ is not a holomorphic mapping it does have holomorphic fibres.
This implies that if we write $\Lambda_Z^1$ for the bundle of
${\mathbf{C}}$-valued $1$-forms on $Z$, then the subbundle
$\tau^*\Lambda_M^1\hookrightarrow\Lambda_Z^1$ is preserved by the complex
structure and we can decompose the quotient
$\Lambda_\tau^1\equiv\Lambda_Z^1/\tau^*\Lambda_M^1$ into types
$\Lambda_\tau^1=\Lambda_\tau^{1,0}\oplus\Lambda_\tau^{0,1}$, obtaining a
surjective homomorphism of bundles $\Lambda_Z^{0,1}\to\Lambda_\tau^{0,1}$
on~$Z$. We may ask about its kernel, noting that, whatever else is true, the
rank of this kernel is certainly $(m+s)-s=m$. There is a natural complex vector
bundle of rank $m$ on ${\mathfrak{X}}$, namely $\Lambda_\mu^{1,0}$, the bundle
of $(1,0)$-forms along the fibres of~$\mu$. Let us use the same notation for
the restriction of this bundle to $Z\subset{\mathfrak{X}}$ and claim a short
exact sequence
\begin{equation}\label{key}
0\to\Lambda_\mu^{1,0}\to\Lambda_Z^{0,1}\to\Lambda_\tau^{0,1}\to 0
\end{equation}
of complex vector bundles on~$Z$. It is a matter of linear algebra in the
tangent spaces to check the validity of this claim. It is convenient to view 
(\ref{key}) as filtering the bundle $\Lambda_Z^{0,1}$, thereby inducing 
filtrations on all its exterior products~$\Lambda_Z^{0,q}$. For example
\[\Lambda_Z^{0,2}=\Lambda_\tau^{0,2}\;+\;
\Lambda_\tau^{0,1}\!\otimes\!\Lambda_\mu^{1,0}
\;+\;\Lambda_\mu^{2,0},\]
meaning that these are the subquotients listed in the natural order, starting
with the quotient itself. Furthermore, as a consequence of $\Lambda_\mu^{1,0}$ 
being the restriction of a holomorphic vector bundle on~${\mathfrak{X}}$, it 
follows easily that the filtration on $\Lambda^{0,\bullet}$ is compatible with 
the $\bar\partial$-operator. (For example, the composition
\[\Lambda_\mu^{1,0}\to\Lambda_Z^{0,1}\xrightarrow{\,\bar\partial\,}
\Lambda_Z^{0,2}\to\Lambda_\tau^{0,2}\]
vanishes and the induced differential operator 
\[\bar\partial_\tau:\Lambda_\mu^{1,0}\to
\Lambda_\tau^{0,1}\otimes\Lambda_\mu^{1,0}\]
is the full $\bar\partial$-operator on $\Lambda_\mu^{1,0}$ but restricted to
act only along the fibres of~$\tau$.) In principle, when the fibres of $\tau$
are compact, it is now a matter of diagram chasing to relate the analytic
cohomology $H^r(\tau^{-1}(U),{\mathcal{O}})$ computed by means of the Dolbeault
resolution $0\to{\mathcal{O}}\to\Lambda^{0,\bullet}$ to data down on~$U$. This
is what is done in detail in~\cite{split}. In practise, however, it is
convenient to relegate this task to a spectral sequence, namely that of a
filtered complex~\cite{ssguide}. Firstly, one replaces
\begin{itemize}
\item $M$ by any open subset $U\subset M$,
\item $Z$ by $\tau^{-1}(U)$,
\item ${\mathbf{C}}M$ by any neighbourhood ${\mathbf{C}}U$ of $U$ in
${\mathbf{C}}M$,
\item ${\mathfrak{X}}$ by $\nu^{-1}({\mathbf{C}}U)$.
\end{itemize}
The resulting spectral sequence is
\[E_1^{p,q}=\Gamma(U,\tau_*^q\Lambda_\mu^{p,0})\Longrightarrow 
H^{p+q}(\tau^{-1}(U),{\mathcal{O}})\]
where $\tau_*^q\Lambda_\mu^{p,0}$ denotes the smooth vector bundle on $U$ whose
fibre over $u\in U$ is the finite-dimensional Dolbeault cohomology
$H^q(\tau^{-1}(u),\Lambda_\mu^{p,0})$ of the compact complex manifold
$\tau^{-1}(u)$ with coefficients in the holomorphic vector
bundle~$\Lambda_\mu^{p,0}$, assuming that the dimension of these spaces remain
constant as $u\in U$ varies.

The spectral sequence we need is a minor variation on this one. It is obtained 
by incorporating a holomorphic vector bundle $V$ on~$Z$. Certainly, we may 
tensor (\ref{key}) with~$V$, obtaining a short exact sequence that we shall 
write as
\[0\to\Lambda_\mu^{1,0}(V)\to\Lambda_Z^{0,1}(V)\to\Lambda_\tau^{0,1}(V)\to 0.\]
One easily checks that $V$ being holomorphic forces the induced filtration on
$\Lambda^{0,\bullet}(V)$ to be compatible with the coupled
$\bar\partial$-operator. The resulting spectral sequence is
\begin{equation}\label{ssV}
E_1^{p,q}=\Gamma(U,\tau_*^q\Lambda_\mu^{p,0}(V))\Longrightarrow 
H^{p+q}(\tau^{-1}(U),{\mathcal{O}}(V))\end{equation}
where $\tau_*^q\Lambda_\mu^{p,0}(V)$ denotes the smooth vector bundle on $U$
whose fibre over $u\in U$ is the finite-dimensional Dolbeault cohomology
$H^q(\tau^{-1}(u),\Lambda_\mu^{p,0}(V))$ of $\tau^{-1}(u)$ with coefficients in
the holomorphic vector bundle $\Lambda_\mu^{p,0}\otimes V$, assuming that the
dimension of these spaces remain constant as $u\in U$ varies.

Notice that the bundles $\Lambda_\mu^{p,0}\otimes V$ on $Z$ may be seen as the
restriction to $Z\subset{\mathfrak{X}}$ of the holomorphic vector bundles
$\Lambda_\mu^{p,0}\otimes\mu^*V$ on~${\mathfrak{X}}$. Assuming that they are
locally free and hence represent vector bundles, the direct images
$\nu_*^q{\mathcal{O}}(\Lambda_\mu^{p,0}\otimes\mu^*V)$ on ${\mathbf{C}}U$
restrict to $U$ as the smooth vector bundles~$\tau_*^q\Lambda_\mu^{p,0}(V)$. 
In the homogeneous setting, as we shall see, this observation allows us to 
compute the terms in~(\ref{ssV}), the point being that the double fibration 
(\ref{twistorcorrespondence}) is homogeneous under the action of 
${\mathrm{SL}}(4,{\mathbf{C}})$. 

Without further ado, we now switch to the notation of \cite{beastwood} to
compute the direct images
$\tau_*^q{\mathcal{O}}(\Lambda_\mu^{p,0}\otimes\mu^*V)$ as homogeneous vector 
bundles on~${\mathrm{Gr}}_2({\mathbf{C}}^4)$ when $V$ is the line bundle 
corresponding to~${\mathcal{O}}(-2)$. With the notation of~\cite{beastwood}, 
only mildly abused, we have 
\[V={\mathcal{O}}(-2)=\xoo{-2}{0}{0}\qquad\mbox{and}\qquad
\Lambda_\mu^{1,0}=\xxo{1}{-2}{1}\]
whence
\[\begin{array}{ccccc}
\mu^*V&\stackrel{d_\mu}{\longrightarrow}&\Lambda_\mu^{1,0}\otimes\mu^*V&
\stackrel{d_\mu}{\longrightarrow}&\Lambda_\mu^{2,0}\otimes\mu^*V\\
\|&&\|&&\|\\
\xxo{-2}{0}{0}&\longrightarrow&\xxo{-1}{-2}{1}&\longrightarrow&\xxo{0}{-3}{0}
\end{array}\]
the only non-zero direct images of which are
\[\nu_*^1\xxo{-2}{0}{0}=\oxo{0}{-1}{0}\qquad\mbox{and}\qquad
\nu_*^0\:\xxo{0}{-3}{0}=\oxo{0}{-3}{0}\]
(cf.~\cite[pp.~99-100]{beastwood}). Consequently, the spectral sequence 
(\ref{ssV}) reads
\[\raisebox{10pt}{$E_1^{p,q}={}$}\quad\begin{picture}(140,30)
\put(0,0){\vector(1,0){140}}
\put(0,0){\vector(0,1){30}}
\put(30,6){\makebox(0,0){$0$}}
\put(70,6){\makebox(0,0){$0$}}
\put(110,6){\makebox(0,0){$\Gamma(U,{\mathcal{E}}[-3])$}}
\put(30,24){\makebox(0,0){$\Gamma(U,{\mathcal{E}}[-1])$}}
\put(70,24){\makebox(0,0){$0$}}
\put(110,24){\makebox(0,0){$0$}}
\end{picture}\]
where ${\mathcal{E}}[k]$ denotes smooth functions of {\it conformal
weight\/}~$k$ on~$S^4$ as a homogeneous space under the action
of~${\mathrm{SO}}(n+1,1)$ as explained, for example, in~\cite{eg}. It is
well-known (and also explained in~\cite{eg}) that the only
${\mathrm{SO}}(n+1,1)$-invariant linear differential operator
${\mathcal{E}}[-1]\to{\mathcal{E}}[-3]$~is, up to a constant multiple, the
conformal Laplacian and in the flat metric this is the ordinary
Laplacian~(\ref{defofLaplacian}). Provided that the $E_2$-differential
$\Gamma(U,{\mathcal{E}}[-1])\to\Gamma(U,{\mathcal{E}}[-3])$ is non-zero, our
desired conclusion (\ref{bateman_justified}) follows. An easy way to see that
this $E_2$-differential is, indeed, non-zero is to observe the following points
(the first of which, by Peetre's Theorem~\cite{peetre}, also shows why we
know without computation that
$d_2:\Gamma(U,{\mathcal{E}}[-1])\to\Gamma(U,{\mathcal{E}}[-3])$ is induced by a
linear differential operator ${\mathcal{E}}[-1]\to{\mathcal{E}}[-3]$).
\begin{itemize}
\item The spectral sequence (\ref{ssV}) respects restriction to smaller open
subsets.
\item $H^2(\tau^{-1}(U),{\mathcal{O}}(-2))\cong
\coker d_2:\Gamma(U,{\mathcal{E}}[-1])\to\Gamma(U,{\mathcal{E}}[-3])$.
\item
$\tau^{-1}({\mathbf{R}}^4)={\mathbf{CP}}^3\setminus\{[*,*,0,0]\}
={\mathbf{CP}}^3\setminus L_\infty
={\mathbf{C}}^3\cup{\mathbf{C}}^3$ (two affine patches).
\item $H^2({\mathbf{CP}}^3\setminus L_\infty,{\mathcal{O}}(-2))=0$.
\end{itemize}

Alternatively, as is effectively done in \cite{split}, one can compute the 
$E_2$-differential explicitly to come to the same conclusion.\end{proof}

At this point we have proved Theorem~\ref{HarmonicHull} in case $m=2$, i.e. in
dimension four. Notice already in (\ref{dimensions}) that if this proof is to
extend verbatim to higher dimensions, then $M$ must have even dimension.

\section{Generalities on double fibrations}
Before we generalise the four-dimensional case to higher even dimensions, we
point out two aspects of our discussion so far that apply rather more broadly.

\subsection{Involutive structures in a correspondence} Let us suppose that we 
start with a correspondence of complex manifolds in which one of these 
manifolds is the complexification of a real manifold, as depicted 
in~(\ref{realdiagram}). Letting $F\equiv\nu^{-1}(M)$, we obtain a diagram
\[\raisebox{-20pt}{\begin{picture}(70,50)
\put(35,40){\makebox(0,0){${\mathfrak{X}}$}}
\put(0,5){\makebox(0,0){$Z$}}
\put(70,5){\makebox(0,0){${\mathbf{C}}M$}}
\put(30,35){\vector(-1,-1){25}}
\put(40,35){\vector(1,-1){25}}
\put(75,35){\vector(1,-1){25}}
\put(72,40){\makebox(0,0){$F$}}
\put(107,5){\makebox(0,0){$M$}}
\put(53.5,40){\makebox(0,0){$\supset$}}
\put(90,5){\makebox(0,0){$\supset$}}
\put(67,35){\line(-2,-1){17}}
\put(48,25.5){\vector(-2,-1){41}}
\put(10,20){\makebox(0,0){$\mu$}}
\put(61,20){\makebox(0,0){$\nu$}}
\put(96,20){\makebox(0,0){$\tau$}}
\put(27,20){\makebox(0,0){$\eta$}}
\end{picture}}\]
of which (\ref{specialrealdiagram}) is the special case where it just so
happens that $\eta:F\to Z$ is a diffeomorphism (whilst in general $\eta$ need
be neither injective nor surjective). It is a matter of linear algebra to check
that there is a short exact sequence of vector bundles
\[0\to\Lambda_{\mathfrak{X}}^{1,0}|_F\to\Lambda_F^1\to\Lambda_\tau^{0,1}\to 0\]
on~$F$, where $\Lambda_F^1$ denotes the bundle of ${\mathbf{C}}$-valued
$1$-forms on~$F$. On the other hand, the short exact sequence
\[0\to\mu^*\Lambda_Z^{1,0}\to\Lambda_{\mathfrak{X}}^{1,0}
\to\Lambda_\mu^{1,0}\to 0\]
defines the bundle $\Lambda_\mu^{1,0}$ of forms of type $(1,0)$ along the 
fibres of~$\mu$. Combining these, we obtain a short exact sequence
\begin{equation}\label{masterkey}
0\to\Lambda_\mu^{1,0}|_F\to\Lambda_F^{0,1}\to\Lambda_\tau^{0,1}\to 0
\end{equation}
on~$F$, generalising (\ref{key}), where $\Lambda_F^{0,1}$ is defined as the 
quotient:--
\[0\to\mu^*\Lambda_Z^{1,0}|_F=\eta^*\Lambda_Z^{1,0}
\to\Lambda_F^1\to\Lambda_F^{0,1}\to 0.\]
The operator $\bar\partial:\Lambda_F^0\to\Lambda_F^{0,1}$ defined as the 
composition
\[\Lambda_F^0\stackrel{d}{\longrightarrow}\Lambda_F^1\to\Lambda_F^{0,1}\]
induces $\bar\partial:\Lambda_F^{0,1}\to\Lambda_F^{0,2}
\equiv\Lambda^2(\Lambda_F^{0,1})$ and it is readily verified that
$\bar\partial\,{}^2=0$. A quotient bundle with this property is called an {\it
involutive structure\/} in the sense of Tr\`eves~\cite{bch,t}. Just as
(\ref{key}) gives rise to a spectral sequence for the Dolbeault cohomology of
$Z$ in terms of smooth data on~$M$, so does (\ref{masterkey}) give rise to a
similar spectral sequence for the {\it involutive cohomology\/} of~$F$. This is
a useful procedure whose final conclusion depends on the particular
circumstances and especially how one interprets this involutive cohomology down
on~$Z$.

For the classical correspondence (\ref{twistorcorrespondence}) regarded as
homogeneous under the action of ${\mathrm{SO}}(6,{\mathbf{C}})$ (rather than
its double cover ${\mathrm{SL}}(4,{\mathbf{C}})$), there are three natural 
choices corresponding to the real forms 
$${\mathrm{SO}}(5,1)\qquad{\mathrm{SO}}(4,2)\qquad{\mathrm{SO}}(3,3)$$
in which we take $M$ to be the unique closed orbit in
${\mathrm{Gr}}_2({\mathbf{C}}^4)$ of the real form. 
For ${\mathrm{SO}}(5,1)$ we
obtain $M=S^4$ and the classical Penrose transform of
Theorem~\ref{classicalPenrose}. For ${\mathrm{SO}}(4,2)$ the range of
$\eta:F\to Z$ is the standard indefinite hyperquadric
${\mathcal{N}}\subset{\mathbf{CP}}^3$ and $\eta:F\to{\mathcal{N}}$ is a circle
bundle. Eventually, we obtain by these means a transform on the CR cohomology
of~${\mathcal{N}}$. Finally, for ${\mathrm{SO}}(3,3)$, although the mapping
$\eta:F\to Z$ is a surjection between $6$-dimensional manifolds, it is not a
diffeomorphism. Instead, it is the real blow-up of ${\mathbf{CP}}^3$ along
${\mathbf{RP}}^3$ equipped with the induced involutive
structure~\cite{eastwood_graham}. As explained in~\cite{split}, the involutive
cohomology in this case may be used as a prop in understanding the classical
John transform~\cite{j} on~${\mathbf{R}}^3$.

\subsection{Homogeneous double fibrations} Suppose $G$ is a complex Lie group
with closed Lie subgroups $P$, $Q$, and $R=P\cap Q$. Then there is a double
fibration
\begin{eqnarray}\label{DoubleFibration}
\raisebox{-25pt}{\begin{picture}(80,55)(0,-5)
\put(40,40){\makebox(0,0){$G/R$}}
\put(0,0){\makebox(0,0){$G/Q$}}
\put(80,0){\makebox(0,0){$G/P$}}
\put(30,30){\vector(-1,-1){20}}
\put(50,30){\vector(1,-1){20}}
\put(14,21){\makebox(0,0){$\mu$}}
\put(67,21){\makebox(0,0){$\nu$}}
\end{picture}}\qquad\equiv\qquad
\raisebox{-25pt}{\begin{picture}(80,55)(0,-5)
\put(40,40){\makebox(0,0){${\mathfrak{X}}$}}
\put(0,0){\makebox(0,0){$Z$}}
\put(80,0){\makebox(0,0){${\mathbf{C}}M$}}
\put(30,30){\vector(-1,-1){20}}
\put(50,30){\vector(1,-1){20}}
\put(14,21){\makebox(0,0){$\mu$}}
\put(67,21){\makebox(0,0){$\nu$}}
\end{picture}}
\end{eqnarray}
where we are taking the liberty of writing $G/P$ as
${\mathbf{C}}M$ to indicate that we have in mind this complex manifold being 
the complexification of a real manifold (even though in this subsection we are 
not insisting on this). We shall also suppose that $P/R$ is compact whence the 
fibres of $\nu$ are compact.
  
Take a point $z\in{\mathbf{C}}M$ and consider 
$L_z\equiv\mu(\nu^{-1}(z))\subset Z$. In particular, if $z=o$, the identity
coset in $G/P$, then $L_o=\mu(P/R)$. It is evident that $\mu$ is injective
on $P/R$ and hence $L_o$ is an embedded submanifold of $Z$ isomorphic to $P/R$.
Since the whole double fibration is homogeneous under $G$ it follows that $L_z$
is an embedded submanifold of $Z$ isomorphic to $L_o$ for each
$z\in{\mathbf{C}}M$. The map 
\[\mu\times\nu:G/R\rightarrow G/Q\times G/P\]
realises $G/R$ as the set of pairs $(l,z)\in G/Q\times G/P$ satisfying the
familiar incidence relation $l\in L_z$.
  
We would like to know when $L_z$ intersects $L_{z'}$ in this homogeneous
setting. Let $g_z\in G$ be a representative of $z$ and $g_{z'}$ a
representative of $z'$. Then it is a matter of untangling definitions to check
that $L_z\cap L_{z'}\neq \emptyset$ if and only if
\begin{eqnarray}\label{PQPappears}
g_{z'}^{-1}g_z\in PQP
\end{eqnarray}    
where $PQP$ denotes the set
\begin{eqnarray}PQP=\{pqp':p,p'\in P, q\in Q\}.
\end{eqnarray}
\begin{defn}\label{nullseparation}
Let us write $z\sim z'$ if and only if 
\begin{eqnarray*}
g_{z'}^{-1}g_z\in PQP
\end{eqnarray*} 
for $g_z\in z$ and $g_{z'}\in z'$, noting that this does not depend on choice
of $g_z$ and $g_{z'}$.\end{defn} 
\noindent We have just observed that $z\sim z'$ if and only if $L_z\cap
L_{z'}\not=\emptyset$. 
The relation `$\sim$' is symmetric (but not transitive) and $z\sim z$ always 
holds.
\begin{lem}\label{PQPclosed}
The set $PQP$ is closed in~$G$.
\end{lem}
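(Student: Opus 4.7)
The plan is to show directly that any limit point of $PQP$ lies in $PQP$. Suppose $g_n = p_n q_n p_n'$ with $p_n, p_n' \in P$ and $q_n \in Q$, and $g_n \to g$ in $G$. The strategy is to use compactness of $P/R$ (which is assumed in the set-up of this subsection) to replace the potentially unbounded factors $p_n$ and $p_n'$ by factors that converge in $P$, absorbing the leftover $R$-pieces into the middle $Q$-factor.

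First, consider the projection $\pi: P \to P/R$. Since $P/R$ is compact, we may pass to a subsequence so that $\pi(p_n) \to pR$ for some $p \in P$. Choosing a local continuous section of the principal $R$-bundle $\pi$ near $pR$ (which exists because $\pi$ is a submersion), we may write $p_n = \hat{p}_n r_n$ with $\hat{p}_n \to p$ in $P$ and $r_n \in R$. Similarly, the quotient $R\backslash P$ (homeomorphic to $P/R$ via inversion) is compact, so after passing to a further subsequence we can write $p_n' = r_n' \hat{p}_n'$ with $\hat{p}_n' \to p'$ in $P$ and $r_n' \in R$.

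Now substitute these decompositions into $g_n$:
\[
g_n \;=\; \hat{p}_n\,(r_n q_n r_n')\,\hat{p}_n'.
\]
Because $R = P \cap Q \subseteq Q$, the element $\tilde{q}_n := r_n q_n r_n'$ lies in $Q$. Solving for it gives $\tilde{q}_n = \hat{p}_n^{-1} g_n (\hat{p}_n')^{-1}$, and the right-hand side converges to $p^{-1} g (p')^{-1}$ as $n \to \infty$. Since $Q$ is closed in $G$ by hypothesis, the limit $p^{-1} g (p')^{-1}$ lies in $Q$, and hence $g = p \bigl(p^{-1} g (p')^{-1}\bigr) p' \in PQP$.

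The only nontrivial point is the lifting of $P/R$-convergence to $P$-convergence modulo $R$; this is where compactness of $P/R$ is essential, and it is also what forces us to group the $R$-factors into the middle, which works precisely because $R \subset Q$. Once these lifts are arranged, closedness of $PQP$ reduces to closedness of $Q$, which is given.
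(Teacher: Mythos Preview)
Your proof is correct and follows essentially the same idea as the paper's: use compactness of $P/R$ to arrange that the $P$-factors converge modulo~$R$, absorb the $R$-pieces into the middle factor (using $R\subset Q$), and then invoke closedness of~$Q$. The paper packages this via the associated bundles $P\times_RQ\times_RP\subset (P/R)\times G\times(P/R)$ and the fact that projection off a compact factor is a closed map, whereas you carry out the same limiting argument directly on sequences; the substance is the same.
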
 
\begin{proof}
Let $R$ act on $P\times G$ by $r(p,g)=(pr^{-1},rg)$ and let $P\times_RG$ denote
the quotient. Define $P\times_RQ$ similarly. Then $P\times_RQ$ is a closed
subset of $P\times_RG$. To see this, note that $(p,g)\mapsto(p,pg)$ induces an
isomorphism $P\times_RG\cong (P/R)\times G$. Therefore, if $\{(p_i,q_i)\}$
represents a sequence in $P\times_RQ$ converging in $P\times_R G$, then
$\{(p_iR,p_iq_i)\}$ is a convergent sequence in $(P/R)\times G$ and, by the
definition of quotient topology, so we may modify the sequence $\{p_i\}$
without loss of generality so that it converges in $P$, say to~$p$. It follows
that $\{q_i\}$ converges and, since $Q$ is closed, the limit point $q$ must be
in~$Q$. Therefore, the sequence in $P\times_RQ$ represented by $\{(p_i,q_i)\}$ 
converges to the point represented by $(p,q)$. Similarly, 
\[P\times_RQ\times_RP\subset P\times_RG\times_RP\cong(P/R)\times G\times(P/R)
\quad\mbox{is closed}.\]
For any topological spaces, the projection $X\times K\to X$ is a closed mapping
if $K$ is compact. Therefore the image of $P\times_RQ\times_RP$ under the
projection
\[(P/R)\times G\times(P/R)\to G\]
is closed. This is exactly $PQP$.\end{proof}
\begin{coro}\label{PQP_is_algebraic}
Suppose in addition that all groups involved are algebraic. Then $PQP$ is an
algebraic subvariety of $G$.
\end{coro}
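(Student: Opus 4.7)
The plan is to combine Lemma \ref{PQPclosed} (closedness in the Hausdorff topology) with Chevalley's theorem on constructible sets. Since $P$, $Q$, and $G$ are algebraic, the multiplication map
\[m : P \times Q \times P \longrightarrow G, \qquad (p,q,p') \longmapsto pqp',\]
is a morphism of algebraic varieties. By Chevalley's theorem, the image $PQP = m(P \times Q \times P)$ is a constructible subset of $G$, that is, a finite Boolean combination of Zariski-locally-closed subsets.

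Next I would invoke the standard comparison between the Zariski and Hausdorff topologies on a complex algebraic variety: for any constructible subset $C$ of a complex algebraic variety, the Zariski closure of $C$ coincides with its closure in the Hausdorff (analytic) topology. This is a classical consequence of the fact that an irreducible constructible set contains a Zariski-open dense subset of its Zariski closure, which is then also dense in the analytic topology.

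Applying this to $C = PQP$: by Lemma \ref{PQPclosed} the set $PQP$ is already closed in the Hausdorff topology, so its Hausdorff closure is itself. Combined with the previous paragraph, the Zariski closure of $PQP$ also equals $PQP$, and therefore $PQP$ is Zariski closed. Being a Zariski-closed constructible set, it is an algebraic subvariety of~$G$.

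The only mild obstacle is that Chevalley's theorem is usually stated for varieties over an algebraically closed field, so one should remark that we are regarding the algebraic groups as complex algebraic groups (which is the relevant setting for the twistor correspondences under consideration). Granting this, the argument above is essentially a two-line deduction and no further calculation is needed.
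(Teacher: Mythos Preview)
Your proposal is correct and follows essentially the same approach as the paper: the authors likewise observe that $PQP$ is constructible as the image of $P\times Q\times P$ under multiplication and then use Lemma~\ref{PQPclosed} to conclude Zariski closedness from Hausdorff closedness. The paper also mentions an alternative, namely rerunning the proof of Lemma~\ref{PQPclosed} directly in the Zariski topology using properness of the projection, but your argument matches their primary one.
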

\begin{proof} As the image of $P\times Q\times P$ under the algebraic mapping
of multiplication $G\times G\times G\to G$, it follows that $PQP$ is
constructible~\cite{borel}. Then since $PQP$ is closed in the usual
topology on~$G$, it is also closed in the Zariski topology. Alternatively, one
can repeat the proof of Lemma~\ref{PQPclosed} almost verbatim for the Zariski
topology, using at the end that the image of a closed set under a proper
mapping is closed.
\end{proof}

\section{Harmonic hull in higher even dimensions}
Following Murray~\cite{murray}, to extend the correspondence
(\ref{twistorcorrespondence}) to higher dimensions, we should identify
${\mathrm{Gr}}_2({\mathbf{C}}^4)$ with the non-singular quadric
${\mathbf{Q}}_4\subset{\mathbf{CP}}^5$ as in the proof of 
Proposition~\ref{manufacture_tau} and consider the quadric ${\mathbf{Q}}_{2m}$ 
as a homogeneous space for $G={\mathrm{SO}}(2m+2,{\mathbf{C}})$. It is 
convenient to take 
\[\textstyle\|(x_0,x_1,\ldots,x_m,y_0,y_1,\ldots,y_m)\|^2\equiv
\sum_{j=0}^mx_jy_j\equiv x^ty\]
as quadratic form on ${\mathbf{C}}^{2m+2}$ and hence to realise $G$ as
\[\left\{\begin{pmatrix}
A&B\\
C&D
\end{pmatrix}\mbox{ s.t.\ }\begin{pmatrix}
A&B\\
C&D
\end{pmatrix}^t
\begin{pmatrix}
0 &I\\
I&0
\end{pmatrix}
\begin{pmatrix}
A&B\\
C&D
\end{pmatrix}=\begin{pmatrix} 0&I\\I&0\end{pmatrix}\right\}
\]
where all these matrices are written in $(m+1)\times(m+1)$ blocks. In other 
words, these blocks are constrained by the following relations.
\begin{equation}\label{constraints}
A^t C+C^t A=0\qquad A^tD+C^tB=I\qquad B^tD+D^t B=0.\end{equation}
Since with these conventions the first standard basis vector in
${\mathbf{C}}^{2m+2}$ is null, we have ${\mathbf{Q}}_{2m}=G/P$, where 
\[P=\left\{\mbox{\scriptsize$\begin{pmatrix}*&*&\cdots&*\\
0&*&\cdots&*\\
\vdots&\vdots&\ddots&\vdots\\
0&*&\cdots&*
\end{pmatrix}$}\in{\mathrm{SO}}(2m+2,{\mathbf{C}})\right\}.\]
More explicitly, it is easy to check that elements of $P$ may be written
uniquely as 
\begin{equation}\label{P}\begin{pmatrix}\lambda&0&0&0\\
0&\hat A&0&\hat B\\
0&0&\lambda^{-1}&0\\
0&\hat C&0&\hat D\end{pmatrix}
\begin{pmatrix}1&-q^t&-p^tq&-p^t\\
0&I&p&0\\
0&0&1&0\\
0&0&q&I\end{pmatrix}\end{equation}
where these blocks are of appropriate sizes and such that 
\[\begin{pmatrix}\hat A&\hat B\\
\hat C&\hat D\end{pmatrix}\in{\mathrm{SO}}(2m,{\mathbf{C}}).\]
For the twistor space in this case, we shall take
\begin{equation}\label{Q}
Z=G/Q\quad\mbox{where }Q=\left\{\begin{pmatrix}A&B\\ 0&D\end{pmatrix}
\in{\mathrm{SO}}(2m+2,{\mathbf{C}})\right\}.\end{equation}
Notice that (\ref{constraints}) forces $D=(A^t)^{-1}$ and $B=AE$, where $E$ is 
skew. Therefore $Q$ has dimension $(m+1)^2+m(m+1)/2=(3m+2)(m+1)/2$ whence 
\[\dim_{\mathbf{C}}Z=(m+1)(2m+1)-(3m+2)(m+1)/2=m(m+1)/2\]
and in comparison with (\ref{dimensions}) we see that $s=m(m-1)/2$. Setting 
${\mathfrak{X}}=G/(P\cap Q)$ and using the notation of~\cite{beastwood}, we 
obtain the following correspondence
\[\raisebox{-25pt}{\begin{picture}(80,55)(0,-5)
\put(40,40){\makebox(0,0){${\mathfrak{X}}$}}
\put(0,0){\makebox(0,0){$Z$}}
\put(80,0){\makebox(0,0){${\mathbf{C}}M$}}
\put(30,30){\vector(-1,-1){20}}
\put(50,30){\vector(1,-1){20}}
\put(14,21){\makebox(0,0){$\mu$}}
\put(67,21){\makebox(0,0){$\nu$}}
\end{picture}}\qquad=\qquad\qquad
\raisebox{-25pt}{\begin{picture}(80,55)(0,-5)
\put(40,40){\makebox(0,0){
\begin{picture}(50,20)
\put(0,10){\makebox(0,0){$\cross$}}
\put(0,10){\line(1,0){40}}
\put(10,10){\makebox(0,0){$\bullet$}}
\put(20,10){\makebox(0,0){$\bullet$}}
\put(30,10){\makebox(0,0){$\bullet$}}
\put(40,10){\makebox(0,0){$\bullet$}}
\put(40,10){\line(2,1){10}}
\put(40,10){\line(2,-1){10}}
\put(50,15){\makebox(0,0){$\times$}}
\put(50,5){\makebox(0,0){$\bullet$}}
\end{picture}}}
\put(0,0){\makebox(0,0){
\begin{picture}(50,20)
\put(0,10){\makebox(0,0){$\bullet$}}
\put(0,10){\line(1,0){40}}
\put(10,10){\makebox(0,0){$\bullet$}}
\put(20,10){\makebox(0,0){$\bullet$}}
\put(30,10){\makebox(0,0){$\bullet$}}
\put(40,10){\makebox(0,0){$\bullet$}}
\put(40,10){\line(2,1){10}}
\put(40,10){\line(2,-1){10}}
\put(50,15){\makebox(0,0){$\times$}}
\put(50,5){\makebox(0,0){$\bullet$}}
\end{picture}}}
\put(80,0){\makebox(0,0){
\begin{picture}(50,20)
\put(0,10){\makebox(0,0){$\cross$}}
\put(0,10){\line(1,0){40}}
\put(10,10){\makebox(0,0){$\bullet$}}
\put(20,10){\makebox(0,0){$\bullet$}}
\put(30,10){\makebox(0,0){$\bullet$}}
\put(40,10){\makebox(0,0){$\bullet$}}
\put(40,10){\line(2,1){10}}
\put(40,10){\line(2,-1){10}}
\put(50,15){\makebox(0,0){$\bullet$}}
\put(50,5){\makebox(0,0){$\bullet$}}
\end{picture}}}
\put(30,30){\vector(-1,-1){20}}
\put(50,30){\vector(1,-1){20}}
\put(14,21){\makebox(0,0){$\mu$}}
\put(67,21){\makebox(0,0){$\nu$}}
\end{picture}}\]
(for convenience, written in case $m=6$) as detailed
in~\cite[pp.~111--115]{beastwood}.

\begin{lemma}
$PQP\subset{\mathrm{SO}}(2m+2,{\mathbf{C}})$ is defined by the equation 
$C_{11}=0$, i.e.
\begin{eqnarray}\label{PQP}
PQP=\left\{\begin{pmatrix}
\begin{picture}(50,50)(-2.5,0)
\thinlines
\put(-5,22.7){\line(1,0){55}}
\put(22.5,-5){\line(0,1){55}}
\put(0,45){\makebox(0,0){$\ast$}}
\put(5,45){\makebox(0,0){$\ast$}}
\put(10,45){\makebox(0,0){$.$}}
\put(15,45){\makebox(0,0){$.$}}
\put(20,45){\makebox(0,0){$\ast$}}
\put(25,45){\makebox(0,0){$\ast$}}
\put(30,45){\makebox(0,0){$\ast$}}
\put(35,45){\makebox(0,0){$.$}}
\put(40,45){\makebox(0,0){$.$}}
\put(45,45){\makebox(0,0){$\ast$}}
\put(0,40){\makebox(0,0){$\ast$}}
\put(5,40){\makebox(0,0){$\ast$}}
\put(10,40){\makebox(0,0){$.$}}
\put(15,40){\makebox(0,0){$.$}}
\put(20,40){\makebox(0,0){$\ast$}}
\put(25,40){\makebox(0,0){$\ast$}}
\put(30,40){\makebox(0,0){$\ast$}}
\put(35,40){\makebox(0,0){$.$}}
\put(40,40){\makebox(0,0){$.$}}
\put(45,40){\makebox(0,0){$\ast$}}
\put(0,35){\makebox(0,0){$.$}}
\put(5,35){\makebox(0,0){$.$}}
\put(10,35){\makebox(0,0){$.$}}
\put(15,35){\makebox(0,0){$.$}}
\put(20,35){\makebox(0,0){$.$}}
\put(25,35){\makebox(0,0){$.$}}
\put(30,35){\makebox(0,0){$.$}}
\put(35,35){\makebox(0,0){$.$}}
\put(40,35){\makebox(0,0){$.$}}
\put(45,35){\makebox(0,0){$.$}}
\put(0,30){\makebox(0,0){$.$}}
\put(5,30){\makebox(0,0){$.$}}
\put(10,30){\makebox(0,0){$.$}}
\put(15,30){\makebox(0,0){$.$}}
\put(20,30){\makebox(0,0){$.$}}
\put(25,30){\makebox(0,0){$.$}}
\put(30,30){\makebox(0,0){$.$}}
\put(35,30){\makebox(0,0){$.$}}
\put(40,30){\makebox(0,0){$.$}}
\put(45,30){\makebox(0,0){$.$}}
\put(0,25){\makebox(0,0){$\ast$}}
\put(5,25){\makebox(0,0){$\ast$}}
\put(10,25){\makebox(0,0){$.$}}
\put(15,25){\makebox(0,0){$.$}}
\put(20,25){\makebox(0,0){$\ast$}}
\put(25,25){\makebox(0,0){$\ast$}}
\put(30,25){\makebox(0,0){$\ast$}}
\put(35,25){\makebox(0,0){$.$}}
\put(40,25){\makebox(0,0){$.$}}
\put(45,25){\makebox(0,0){$\ast$}}
\put(0,20){\makebox(0,0){\scriptsize$0$}}
\put(5,20){\makebox(0,0){$\ast$}}
\put(10,20){\makebox(0,0){$.$}}
\put(15,20){\makebox(0,0){$.$}}
\put(20,20){\makebox(0,0){$\ast$}}
\put(25,20){\makebox(0,0){$\ast$}}
\put(30,20){\makebox(0,0){$\ast$}}
\put(35,20){\makebox(0,0){$.$}}
\put(40,20){\makebox(0,0){$.$}}
\put(45,20){\makebox(0,0){$\ast$}}
\put(0,15){\makebox(0,0){$\ast$}}
\put(5,15){\makebox(0,0){$\ast$}}
\put(10,15){\makebox(0,0){$.$}}
\put(15,15){\makebox(0,0){$.$}}
\put(20,15){\makebox(0,0){$\ast$}}
\put(25,15){\makebox(0,0){$\ast$}}
\put(30,15){\makebox(0,0){$\ast$}}
\put(35,15){\makebox(0,0){$.$}}
\put(40,15){\makebox(0,0){$.$}}
\put(45,15){\makebox(0,0){$\ast$}}
\put(0,10){\makebox(0,0){$.$}}
\put(5,10){\makebox(0,0){$.$}}
\put(10,10){\makebox(0,0){$.$}}
\put(15,10){\makebox(0,0){$.$}}
\put(20,10){\makebox(0,0){$.$}}
\put(25,10){\makebox(0,0){$.$}}
\put(30,10){\makebox(0,0){$.$}}
\put(35,10){\makebox(0,0){$.$}}
\put(40,10){\makebox(0,0){$.$}}
\put(45,10){\makebox(0,0){$.$}}
\put(0,5){\makebox(0,0){$.$}}
\put(5,5){\makebox(0,0){$.$}}
\put(10,5){\makebox(0,0){$.$}}
\put(15,5){\makebox(0,0){$.$}}
\put(20,5){\makebox(0,0){$.$}}
\put(25,5){\makebox(0,0){$.$}}
\put(30,5){\makebox(0,0){$.$}}
\put(35,5){\makebox(0,0){$.$}}
\put(40,5){\makebox(0,0){$.$}}
\put(45,5){\makebox(0,0){$.$}}
\put(0,0){\makebox(0,0){$\ast$}}
\put(5,0){\makebox(0,0){$\ast$}}
\put(10,0){\makebox(0,0){$.$}}
\put(15,0){\makebox(0,0){$.$}}
\put(20,0){\makebox(0,0){$\ast$}}
\put(25,0){\makebox(0,0){$\ast$}}
\put(30,0){\makebox(0,0){$\ast$}}
\put(35,0){\makebox(0,0){$.$}}
\put(40,0){\makebox(0,0){$.$}}
\put(45,0){\makebox(0,0){$\ast$}}
\end{picture}
\end{pmatrix}\in SO(2m+2,{\mathbf{C}})\right\}.\end{eqnarray}
\end{lemma}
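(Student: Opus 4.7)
The plan is to establish the two inclusions separately, reducing the nontrivial direction to a $P$-orbit computation on the quadric ${\mathbf{Q}}_{2m} = G/P$.

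For the inclusion $PQP \subseteq \{C_{11} = 0\}$, I would argue directly from~(\ref{P}). A short computation with the Levi and unipotent factors shows that the $(m{+}2)$-th row of any $p \in P$ is $(0,\ldots,0,\lambda^{-1},0,\ldots,0)$ with the single nonzero entry in column $m{+}2$; equivalently, $(pv)_{m+2} = \lambda^{-1}v_{m+2}$, so $P$ preserves the hyperplane $e_1^\perp = \{v_{m+2}=0\}$. Since any $q \in Q$ is block upper triangular, $qe_1$ lies in $W_0 := \mathrm{span}(e_1,\ldots,e_{m+1}) \subset e_1^\perp$. Consequently, for $g = p_1 q p_2 \in PQP$ we have $p_2 e_1 = \lambda_2 e_1$ and then $g e_1 = \lambda_2\, p_1(q e_1) \in e_1^\perp$, giving $C_{11}(g) = (ge_1)_{m+2} = 0$.

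For the reverse inclusion, I observe that both $PQP$ and $\{C_{11}=0\}$ are unions of right $P$-cosets (right multiplication by $p' \in P$ merely rescales the first column of $g$ by $\lambda'$, hence rescales $C_{11}$), so it suffices to prove equality of their images in $G/P$. The image of $\{C_{11}=0\}$ is the tangent hyperplane section $\{[v]\in {\mathbf{Q}}_{2m} : \langle v,e_1\rangle = 0\}$ at $o = [e_1]$, while the image of $PQP$ is $P\cdot(Q \cdot o) = P \cdot {\mathbf{P}}(W_0)$, since $Q \cdot o = {\mathbf{P}}(W_0)$ (as $q$ varies over $Q$, $qe_1$ sweeps out the first column of an arbitrary invertible $(m{+}1)\times(m{+}1)$ block $A$, i.e.\ all of $W_0\setminus\{0\}$). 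The inclusion $P \cdot {\mathbf{P}}(W_0) \subseteq \{[v] : \langle v,e_1\rangle = 0\}$ is exactly the content of the first paragraph. For the opposite inclusion, given a null $v$ with $\langle v,e_1\rangle=0$, either $v \in {\mathbf{C}}e_1 \subset W_0$ (trivial), or $U := {\mathbf{C}}e_1 + {\mathbf{C}}v$ is a $2$-dimensional isotropic plane; using $m\geq 2$, I extend $U$ to a maximal isotropic subspace $W$ in the same component of the orthogonal Grassmannian as $W_0$. The ${\mathrm{SO}}(2m,{\mathbf{C}})$ factor of the Levi of $P$, acting on $e_1^\perp/{\mathbf{C}}e_1$, is transitive on maximal isotropic subspaces containing ${\mathbf{C}}e_1$ within each component, producing $p \in P$ with $p W_0 = W$; then $v \in p W_0$ and hence $[v] \in P\cdot{\mathbf{P}}(W_0)$.

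The main obstacle is the component-matching in the extension step: $Z = G/Q$ is one of the two connected components of the orthogonal Grassmannian, so the extension $W$ must land in that specific component. This is available precisely because $m \geq 2$: the quadratic space $U^\perp/U$ has dimension $2(m-1)\geq 2$, and so has maximal isotropic subspaces in both of its two components, each extending $U$ to a maximal isotropic subspace of ${\mathbf{C}}^{2m+2}$ in the corresponding component. (For $m=1$ this mechanism degenerates and the lemma would indeed fail, which is consistent with the exceptional status of dimension $2$ already noted in the paper.)
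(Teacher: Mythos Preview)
Your proof is correct and takes a genuinely different route from the paper's. The forward inclusion is argued essentially the same way (apply $PQP$ to $e_1$), but for the reverse inclusion the paper proceeds by a dimension count: invoking Corollary~\ref{PQP_is_algebraic} to know $PQP$ is Zariski-closed, then computing $\dim PQP$ via the identification $g_z\in PQP\Leftrightarrow L_z\cap L_o\neq\emptyset$ and a calculation of $\dim(L_z\cap L_o)$, finding codimension~$1$, and finally checking that $\{C_{11}=0\}$ is irreducible (smooth away from $P$, with a quadratic singularity along~$P$). Your argument instead works pointwise on the quadric: you identify both sides with subsets of ${\mathbf{Q}}_{2m}=G/P$ and, given a null $v\in e_1^\perp$, extend ${\mathbf{C}}e_1+{\mathbf{C}}v$ to a maximal isotropic $W$ in the correct component (using $m\ge 2$), then move $W_0$ to $W$ by the $\mathrm{SO}(2m,{\mathbf{C}})$-Levi of~$P$. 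This is more elementary and self-contained---it bypasses Lemma~\ref{PQPclosed}, Corollary~\ref{PQP_is_algebraic}, and the irreducibility check entirely---and it makes the role of the hypothesis $m\ge 2$ explicit (the extension step needs $\dim(U^\perp/U)=2(m-1)\ge 2$). The paper's approach, by contrast, packages the computation into the general machinery of \S3 and the incidence geometry of the double fibration, which is the viewpoint used throughout the rest of the argument.
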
 
\begin{proof}
By applying $PQP$ to the first standard basis vector, bearing in mind the form
(\ref{P}) and (\ref{Q}) of elements from $P$ and $Q$, it is clear that
\begin{equation}\label{clearthat}PQP\subseteq\{C_{11}=0\}.\end{equation}
It is certainly possible to show equality by direct calculation and
normalisation (using that there are just two orbits for the action of $G$ on
${\mathbf{C}}^{2m+2}\setminus\{0\}$ according to whether a vector is null or
not). Alternatively, we can use Corollary~\ref{PQP_is_algebraic} to infer
equality once we know that they have the same dimension provided that the 
right hand side is also irreducible. To compute the dimension of~$PQP$, let us 
take $z'=o$ in (\ref{PQPappears}) to conclude that 
\[g_z\in PQP\iff L_z\cap L_o\not=\emptyset.\]
Therefore, 
\begin{equation}\label{dimPQP}\begin{array}{rcl}\dim PQP&=&
\dim P+\dim\{z\in{\mathbf{C}}M\mbox{ s.t.\ }L_z\cap L_o\not=\emptyset\}\\
&=&2m^2+m+1
+\dim\{z\in{\mathbf{C}}M\mbox{ s.t.\ }L_z\cap L_o\not=\emptyset\}.
\end{array}\end{equation}
Now we need to know $\dim(L_z\cap L_o)$ in case this intersection is non-empty. 
For this we can take $z$ to be represented by the second standard basis vector 
and identify this intersection with 
\[{\mathrm{SO}}(2m-2,{\mathbf{C}})\Big/\left\{\begin{pmatrix}
\tilde A&\tilde A\tilde E\\
0&(\tilde A^t)^{-1} \end{pmatrix}\right\},\] 
deducing that it has dimension $(m-1)(m-2)/2$. Bearing in mind that the fibres
of $\mu$ have dimension $m$ and that $L_o$ itself has dimension $m(m-1)/2$ we
find that 
\[\dim\{z\in{\mathbf{C}}M\mbox{ s.t.\ }L_z\cap L_o\not=\emptyset\}=
m+\frac{m(m-1)}{2}-\frac{(m-1)(m-2)}{2}=2m-1.\]
{From} (\ref{dimPQP}) we deduce that 
\[\dim PQP=2m^2+m+1+(2m-1)=m(2m+3).\]
Therefore, $PQP$ has codimension $1$ in ${\mathrm{SO}}(2m+2,{\mathbf{C}})$ and
it remains to show that the right hand side of (\ref{clearthat}) is
irreducible. For this one checks in local co\"ordinates that
$\{C_{11}=0\}\subset{\mathrm{SO}}(2m+2,{\mathbf{C}})$ is smooth except along
$P$ where it has a quadratic singularity. Since $P$ has codimension $2m$ in $G$
and $m\geq 2$ irreducibility follows.
\end{proof}

To complete the proof of Theorem~\ref{HarmonicHull} in higher dimensions, there
are just two remaining issues to address. The first is geometric, namely to
check that the relation $z\sim z'$ of Definition~\ref{nullseparation}
corresponds to null separation as it does when $m=2$. The second issue is 
analytic, namely to check that there is an appropriate double fibration 
transform. 

To understand the geometry one works in an affine chart on 
${\mathbf{C}}M=G/P$. Using block matrices as above, it is easy to check that 
\[{\mathbf{C}}^{2m}\ni(x,y)\longmapsto\begin{pmatrix}1&0&0&0\\
x&I&0&0\\
-x^ty&-y^t&1&-x^t\\
y&0&0&I\end{pmatrix}\]
defines a c\"oordinate chart on $G/P$ in which vector addition corresponds 
exactly to multiplication of such matrices. In particular, 
\[(x,y)\sim(x',y')\iff(x-x')^t(y-y')=0,\]
which is the null separation we require. It is also straightforward to check
in these co\"ordinates that the round sphere $S^{2m}$ is conformally
embedded as a totally real submanifold of~${\mathbf{C}}M$.

Finally, it remains to generalise the Penrose transform to this setting and 
for this we follow the proof of Theorem~\ref{classicalPenrose}. The essential 
point is that this proof reduces the spectral sequence for the fibration
\[\tau:Z\to S^{2m}\]
to the spectral sequence of the more familiar double fibration transform for
\[\begin{picture}(80,55)(0,-5)
\put(40,40){\makebox(0,0){
\begin{picture}(50,20)
\put(0,10){\makebox(0,0){$\cross$}}
\put(0,10){\line(1,0){40}}
\put(10,10){\makebox(0,0){$\bullet$}}
\put(20,10){\makebox(0,0){$\bullet$}}
\put(30,10){\makebox(0,0){$\bullet$}}
\put(40,10){\makebox(0,0){$\bullet$}}
\put(40,10){\line(2,1){10}}
\put(40,10){\line(2,-1){10}}
\put(50,15){\makebox(0,0){$\times$}}
\put(50,5){\makebox(0,0){$\bullet$}}
\end{picture}}}
\put(0,0){\makebox(0,0){
\begin{picture}(50,20)
\put(0,10){\makebox(0,0){$\bullet$}}
\put(0,10){\line(1,0){40}}
\put(10,10){\makebox(0,0){$\bullet$}}
\put(20,10){\makebox(0,0){$\bullet$}}
\put(30,10){\makebox(0,0){$\bullet$}}
\put(40,10){\makebox(0,0){$\bullet$}}
\put(40,10){\line(2,1){10}}
\put(40,10){\line(2,-1){10}}
\put(50,15){\makebox(0,0){$\times$}}
\put(50,5){\makebox(0,0){$\bullet$}}
\end{picture}}}
\put(80,0){\makebox(0,0){
\begin{picture}(50,20)
\put(0,10){\makebox(0,0){$\cross$}}
\put(0,10){\line(1,0){40}}
\put(10,10){\makebox(0,0){$\bullet$}}
\put(20,10){\makebox(0,0){$\bullet$}}
\put(30,10){\makebox(0,0){$\bullet$}}
\put(40,10){\makebox(0,0){$\bullet$}}
\put(40,10){\line(2,1){10}}
\put(40,10){\line(2,-1){10}}
\put(50,15){\makebox(0,0){$\bullet$}}
\put(50,5){\makebox(0,0){$\bullet$}}
\end{picture}}}
\put(30,30){\vector(-1,-1){20}}
\put(50,30){\vector(1,-1){20}}
\put(14,21){\makebox(0,0){$\mu$}}
\put(67,21){\makebox(0,0){$\nu$}}
\end{picture}\]
and this has already been worked out~\cite[p.~115]{beastwood}. We obtain the 
following result (as in~\cite[Theorem~32]{murray}).
\begin{theo}
For $U\subseteq{\mathbf{R}}^{2m}$ any open subset, there is a 
natural isomorphism
\[{\mathcal{P}}:H^{m(m-1)/2}(\tau^{-1}(U),{\mathcal{O}}(-2m+2))
\stackrel{\;\simeq\quad}{\longrightarrow}
\{\phi:U\to{\mathbf{C}}\mbox{ s.t.\ }\Delta\phi=0\}.\]
\end{theo}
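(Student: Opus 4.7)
The plan is to mimic the proof of Theorem~\ref{classicalPenrose} step by step, now applied to the homogeneous double fibration $Z\stackrel{\mu}{\leftarrow}{\mathfrak{X}}\stackrel{\nu}{\rightarrow}{\mathbf{C}}M$ built from the parabolic subgroups $P$, $Q$, $R=P\cap Q$ of $G={\mathrm{SO}}(2m+2,{\mathbf{C}})$, with twisting bundle $V={\mathcal{O}}(-2m+2)$. The general machinery of Section~4.1 still applies: the restriction $\tau:\tau^{-1}(U)\to U$ of $\nu$ has compact holomorphic fibres (the fibres of $\nu$ itself), so the spectral sequence~(\ref{ssV}) is available and abuts to $H^{p+q}(\tau^{-1}(U),{\mathcal{O}}(-2m+2))$ with $E_1$-term given by sections over $U$ of the smooth bundles $\tau_*^q\Lambda_\mu^{p,0}(V)$.

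Next, I would identify each of these $\tau_*^q\Lambda_\mu^{p,0}(V)$ with the restriction to $U\subset S^{2m}$ of the holomorphic direct image $\nu_*^q{\mathcal{O}}(\Lambda_\mu^{p,0}\otimes\mu^*V)$ on ${\mathbf{C}}M={\mathbf{Q}}_{2m}$. These homogeneous direct images are computed fibrewise by Bott--Borel--Weil, and the relevant computation has been done for exactly this double fibration on~\cite[p.~115]{beastwood}. The outcome is that the $E_1$-page is very sparse: only two non-zero entries survive, occurring in positions $(p,q)=(0,s)$ and $(p,q)=(m,s-m+1)$ where $s=m(m-1)/2$, corresponding respectively to smooth functions on $U$ of conformal weights $-2m+2$ and $-2m$ in the sense of~\cite{eg}. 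The total abutment degree $p+q$ of the first entry is $s=m(m-1)/2$, which explains the cohomological degree appearing in the statement.

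By Peetre's theorem~\cite{peetre}, the only possible $E_2$-differential between these two surviving entries is a linear differential operator $\Gamma(U,{\mathcal{E}}[-2m+2])\to\Gamma(U,{\mathcal{E}}[-2m])$ that is conformally invariant on~$S^{2m}$. Up to a constant multiple there is only one such operator, namely the conformal Laplacian, and on the flat affine patch ${\mathbf{R}}^{2m}\subset S^{2m}$ this is just the Euclidean Laplacian~(\ref{defofLaplacian}). Consequently, as soon as this $E_2$-differential is non-zero, the spectral sequence degenerates at $E_3$ and produces a natural isomorphism between $H^{m(m-1)/2}(\tau^{-1}(U),{\mathcal{O}}(-2m+2))$ and the kernel of~$\Delta$ on $U$, as required.

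The main obstacle is confirming that this $E_2$-differential is not identically zero, since everything else follows automatically. Following the strategy used in the four-dimensional case, I would specialise to $U={\mathbf{R}}^{2m}$, in which case $\tau^{-1}({\mathbf{R}}^{2m})$ is the complement in $Z$ of the twistor line at infinity and decomposes as a union of standard affine charts. A direct Mayer--Vietoris or \v{C}ech calculation should yield the vanishing $H^{m(m-1)/2+1}(\tau^{-1}({\mathbf{R}}^{2m}),{\mathcal{O}}(-2m+2))=0$, forcing the $E_2$-differential to be surjective and hence non-zero, and the naturality of the spectral sequence under restriction to smaller open sets then transfers this non-vanishing to arbitrary~$U$.
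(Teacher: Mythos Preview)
Your approach is essentially the same as the paper's: run the spectral sequence~(\ref{ssV}) for the $D_{m+1}$ correspondence and appeal to the Bott--Borel--Weil computation on \cite[p.~115]{beastwood} (together with~\cite{murray}) to identify the surviving entries and the differential as the conformal Laplacian. The paper's own proof is in fact much terser than yours, so the level of detail you provide is appropriate.

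That said, several indices have been carried over verbatim from the $m=2$ case and are wrong for general~$m$. The two non-zero entries sit in bidegrees $(0,s)$ and $(m,s-m+1)$ as you say, so the connecting differential is $d_m$, not~$d_2$; likewise the spectral sequence degenerates at $E_{m+1}$, not~$E_3$. (These coincide only when $m=2$; of course $E_2=\cdots=E_m$ here since no earlier differential can link the two entries, so the slip is terminological rather than substantive.) Also, the conformal weights are $1-m$ and $-1-m$, not $-2m+2$ and~$-2m$: you have confused the degree of the twisting line bundle on $Z$ with the conformal weight on~$S^{2m}$. In dimension $n=2m$ the conformally invariant Laplacian maps $\mathcal{E}[1-n/2]\to\mathcal{E}[-1-n/2]$, matching the $m=2$ case where the paper found $\mathcal{E}[-1]\to\mathcal{E}[-3]$. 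Finally, for the non-vanishing argument, note that $\tau^{-1}({\mathbf{R}}^{2m})$ is the complement in $Z$ of a single fibre $L_\infty$ (not a ``line'' in the projective-space sense once $m>2$), though the covering-by-affines strategy still goes through.
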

\noindent Theorem~\ref{HarmonicHull} is an immediate consequence.

\section{Harmonic hull in odd dimensions}\label{odd}
The harmonic hull in odd dimensions behaves differently. The most blatant
difference is that it may no longer exist in the na\"{\i}ve sense defined in
the introduction. Let us see, for example, that there is no
maximal open set in ${\mathbf{C}}^3$ to which all harmonic functions on
$U={\mathbf{R}}^3\setminus\{0\}$ extend. In accordance with
Definition~\ref{tildeU}
\begin{equation}\label{tildeUinthiscase}
\tilde{U}={\mathbf{C}}^3\setminus\{z_1{}^2+z_2{}^2+z_3{}^2=0\}.\end{equation}
We shall see that $\tilde{U}$ is the only candidate for the harmonic hull 
and yet there are harmonic functions on $U$ that do not extend there. The 
Newtonian potential 
\[r(x)\equiv\frac{1}{\sqrt{x_1{}^2+x_2{}^2+x_3{}^2}}\]
is harmonic on~$U$ and extends to a neighbourhood of $U$ in ${\mathbf{C}}^3$ as
\[r(z)\equiv\frac{1}{\sqrt{z_1{}^2+z_2{}^2+z_3{}^2}}\]
for a suitably chosen branch of square root. Consider the embedding
\[F:{\mathbf{C}}\setminus\{-i,0,i\}\hookrightarrow\tilde{U}\]
given by $F(\zeta)=(\zeta,\zeta^2,0)$. There is no well-defined branch of
\[r\circ F(\zeta)=\frac{1}{\zeta\sqrt{1+\zeta^2}}\]
on ${\mathbf{C}}\setminus\{-i,0,i\}$ and so there is no well-defined branch of 
$r(z)$ on~$\tilde{U}$. Nevertheless, harmonic functions on $U$ do extend into 
${\mathbf{C}}^3$ and, in fact, it is shown in \cite{ACL} that all harmonic 
functions $U$ extend uniquely to the {\it reduced\/} harmonic hull
\begin{equation}\label{reducedhull}
{\mathbf{C}}^3\setminus\{z_1{}^2+z_2{}^2+z_3{}^2\in{\mathbf{R}}_{\leq 0}\}.
\end{equation}
We shall use reduced harmonic hulls together with the conformal invariance of
the Laplacian to write $\tilde{U}$ as a union of open subsets of
${\mathbf{C}}^3$ to which all harmonic functions on $U$ extend. By doing so,
we see that $U$ cannot have a harmonic hull.

If $f$ is harmonic on an open subset of ${\mathbf{R}}^3$, then 
\[F({\mathbf{X}})\equiv\frac{\displaystyle f\Big(
\frac{X_1-\epsilon\|{\mathbf{X}}\|^2}
{1-2\epsilon X_1+\epsilon^2\|{\mathbf{X}}\|^2},
\frac{X_2}{1-2\epsilon X_1+\epsilon^2\|{\mathbf{X}}\|^2},
\frac{X_3}{1-2\epsilon X_1+\epsilon^2\|{\mathbf{X}}\|^2}\Big)}
{\sqrt{1-2\epsilon X_1+\epsilon^2\|{\mathbf{X}}\|^2}}
\]
is harmonic wherever $1-2\epsilon X_1+\epsilon^2\|{\mathbf{X}}\|^2>0$. 
But
\[1-2\epsilon X_1+\epsilon^2\|{\mathbf{X}}\|^2=
\epsilon^2\big((X_1-1/\epsilon)^2+X_2{}^2+X_3{}^2\big),\]
so if $f$ is harmonic on $U$, then $F$ is harmonic on
${\mathbf{R}}^3\setminus\{(1/\epsilon,0,0),(0,0,0)\}$. {From} the conformal
point of view $f$ and $F$ are the same conformal density of weight $-1/2$
defined on the twice-punctured sphere but viewed in two different flat
co\"ordinate systems via stereographic projection. In any case, by
\cite{ACL} the harmonic function $F$ extends to the reduced harmonic hull of
${\mathbf{R}}^3\setminus\{(1/\epsilon,0,0),(0,0,0)\}$, namely
\begin{equation}\label{changeandextend}
{\mathbf{C}}^3\setminus\big(\{Z_1{}^2+Z_2{}^2+Z_3{}^2\in{\mathbf{R}}_{\leq 0}\}
\cup\{(Z_1-1/\epsilon)^2+Z_2{}^2+Z_3{}^2\in{\mathbf{R}}_{\leq 0}\}\big).
\end{equation}
But now if $F$ is holomorphic and complex harmonic on an open subset of 
${\mathbf{C}}^3$, and we write ${\mathbf{z}}^2$ for $z_1{}^2+z_2{}^2+z_3{}^2$, 
then 
\begin{equation}\label{finaldestination}
f({\mathbf{z}})\equiv\frac{\displaystyle F\Big(
\frac{z_1+\epsilon{\mathbf{z}}^2}
{1+2\epsilon z_1+\epsilon^2{\mathbf{z}}^2},
\frac{z_2}{1+2\epsilon z_1+\epsilon^2{\mathbf{z}}^2},
\frac{z_3}{1+2\epsilon z_1+\epsilon^2{\mathbf{z}}^2}\Big)}
{\sqrt{1+2\epsilon z_1+\epsilon^2{\mathbf{z}}^2}}\end{equation}
is harmonic wherever one can choose a well-defined branch 
of~$\sqrt{1+2\epsilon z_1+\epsilon^2{\mathbf{z}}^2}$. In particular, if we 
insist that $\|{\mathbf{z}}\|^2<1/(9\epsilon^2)$, then 
\[\begin{array}{rcl}\Re(1+2\epsilon z_1+\epsilon^2{\mathbf{z}}^2)&=&
(1+\epsilon x_1)^2+\epsilon^2(x_2{}^2+x_3{}^2-y_1{}^2-y_2{}^2-y_3{}^2)\\
&>&(2/3)^2-1/9-1/9-1/9\enskip=\enskip 1/9\enskip>\enskip 0
\end{array}\]
whence there is certainly no problem in
defining~$\sqrt{1+2\epsilon z_1+\epsilon^2{\mathbf{z}}^2}$. Now
\[(Z_1,Z_2,Z_3)=\Big(
\frac{z_1+\epsilon{\mathbf{z}}^2}
{1+2\epsilon z_1+\epsilon^2{\mathbf{z}}^2},
\frac{z_2}{1+2\epsilon z_1+\epsilon^2{\mathbf{z}}^2},
\frac{z_3}{1+2\epsilon z_1+\epsilon^2{\mathbf{z}}^2}\Big)\]
if and only if 
\[(z_1,z_2,z_3)=\Big(
\frac{Z_1-\epsilon{\mathbf{Z}}^2}
{1-2\epsilon Z_1+\epsilon^2{\mathbf{Z}}^2},
\frac{Z_2}{1-2\epsilon Z_1+\epsilon^2{\mathbf{Z}}^2},
\frac{Z_3}{1-2\epsilon Z_1+\epsilon^2{\mathbf{Z}}^2}\Big)\]
in which case
\[1=(1+2\epsilon z_1+\epsilon^2{\mathbf{z}}^2)
(1-2\epsilon Z_1+\epsilon^2{\mathbf{Z}}^2).\]
Therefore, by changing co\"ordinates on $U$ to obtain $F({\mathbf{X}})$ from 
$f({\mathbf{x}})$, then extending to the reduced harmonic hull 
(\ref{changeandextend}), and finally considering $f({\mathbf{z}})$ defined 
by~(\ref{finaldestination}), we certainly obtain a holomorphic extension of 
$f({\mathbf{x}})|_{\{\|{\mathbf{x}}\|^2<1/(9\epsilon^2)\}}$ to 
\begin{equation}\label{extended}
\left\{{\mathbf{z}}\in{\mathbf{C}}^3\mbox{ s.t.}
\begin{array}l\|{\mathbf{z}}\|^2<1/(9\epsilon^2)\qquad
{\mathbf{Z}}^2\not\in{\mathbf{R}}_{\leq 0}\\
(Z_1-1/\epsilon)^2+Z_2{}^2+Z_3{}^2\not\in{\mathbf{R}}_{\leq 0}
\end{array}\!\right\}\end{equation}
But, using the identities
\[{\mathbf{Z}}^2=
\frac{{\mathbf{z}}^2}{1+2\epsilon z_1+\epsilon^2{\mathbf{z}}^2}
\qquad (Z_1-1/\epsilon)^2+Z_2{}^2+Z_3{}^2=
\frac{1}{\epsilon^2(1+2\epsilon z_1+\epsilon^2{\mathbf{z}}^2)}\]
and our previous observation that 
$\Re(1+2\epsilon z_1+\epsilon^2{\mathbf{z}}^2)>0$ when 
$\|{\mathbf{z}}\|^2<1/(9\epsilon^2)$, we may rewrite 
(\ref{extended}) as
\begin{equation}\label{curvedextension}
\left\{{\mathbf{z}}\in{\mathbf{C}}^3\mbox{ s.t.\ }
\|{\mathbf{z}}\|^2<1/(9\epsilon^2)\mbox{ and }
\frac{{\mathbf{z}}^2}{1+2\epsilon z_1+\epsilon^2{\mathbf{z}}^2}
\not\in{\mathbf{R}}_{\leq 0}\right\}.\end{equation}
We claim that all points in $\tilde{U}$ (given explicitly 
as~(\ref{tildeUinthiscase})) that 
are not in~(\ref{reducedhull}), lie in (\ref{curvedextension}) for some choice 
of orthogonal co\"ordinate system on ${\mathbf{R}}^3$ and 
some~$\epsilon\not=0$. In other words, we should accomplish this for 
\[{\mathbf{z}}={\mathbf{x}}+i{\mathbf{y}}\in{\mathbf{C}}^3\mbox{ s.t.\ }
\langle x,y\rangle=0\mbox{ and }\|x\|^2<\|y\|^2.\]
Choose co\"ordinates so that $y=(y_1,0,0)$. It follows that $x_1=0$ and 
$y_1\not=0$ whence
\[\frac{{\mathbf{z}}^2}{1+2\epsilon z_1+\epsilon^2{\mathbf{z}}^2}=
\frac{{\mathbf{z}}^2}{1+2i\epsilon y_1+\epsilon^2{\mathbf{z}}^2}\]
cannot be real when ${\mathbf{z}}^2$ is real. Hence ${\mathbf{z}}$ 
lies in the set (\ref{curvedextension}) if~$\|z\|^2<1/(9\epsilon^2)$, a 
condition which evaporates as $\epsilon\to 0$. 
We have shown that 
\[\tilde{U}=\bigcup_{A\in{\mathrm{SO}}(3)}\bigcup_{\epsilon\in{\mathbf{R}}}
A\Big\{{\mathbf{z}}\in{\mathbf{C}}^3\mbox{ s.t.\ }
\|{\mathbf{z}}\|^2<1/(9\epsilon^2)\mbox{ and } 
\frac{{\mathbf{z}}^2}{1+2\epsilon z_1+\epsilon^2{\mathbf{z}}^2}
\not\in{\mathbf{R}}_{\leq 0}\Big\}\]
completing our claim that $\tilde{U}$ may be written as a union of open subsets
of ${\mathbf{C}}^3$ to which all harmonic functions on $U$ extend. The argument
immediately generalises to show that ${\mathbf{R}}^n\setminus\{0\}$ does not
have harmonic hull for any odd~$n$.

\end{document}